\DeclareMathOperator{\cof}{\cof}
\newcommand{\br}{\mathbb{R}}
\newcommand{\bz}{\mathbb{Z}}
\newcommand{\bs}{\mathbb{S}}
\newcommand{\frk}[1]{\mathfrak{#1}}
\newcommand{\defin}[1]{\begin{defn} {#1} \end{defn}}
\newcommand{\propos}[1]{\begin{prop} {#1} \end{prop}}
\newcommand{\coro}[1]{\begin{cor} {#1} \end{cor}}
\theoremstyle{plain}
\newcommand{\al}{\alpha}
\newcommand{\be}{\beta}
\newcommand{\s}{\sigma}
\newcommand{\la}{\lambda}
\newcommand{\eps}{\varepsilon}
\theoremstyle{plain}
\newtheorem{thm}{Theorem}
\newtheorem{lemma}[thm]{Lemma}
\newtheorem{cor}[thm]{Corollary}
\newtheorem{conj}[thm]{Conjecture}
\newtheorem{prop}[thm]{Proposition}
\newtheorem{qn}[thm]{Question}
\theoremstyle{definition}
\newtheorem{defn}[thm]{Definition}
\theoremstyle{remark}
\newtheorem{rem}[thm]{Remark}
\numberwithin{equation}{section}
\numberwithin{thm}{section}
\numberwithin{equation}{section}
\numberwithin{thm}{section}
\begin{document}
\title[Non-Rigid Rank-One]{Non-Rigid Rank-One Infinite Measures on the Circle}

\author[Drillick]{Hindy Drillick}
\address[Hindy Drillick]{
     Columbia University, Department of Mathematics, New York, NY 10027, USA.}
\email{hdrillick@math.columbia.edu}

\author[Espinosa]{Alonso Espinosa-Dominguez}
\address[Alonso Espinosa-Dominguez]{Department of Mathematics\\
     Massachusetts Institute of Technology \\ Cambridge, MA 02139, USA.}
\email{aespdom@mit.edu}

\author[Jones]{Jennifer N. Jones-Baro}
\address[Jennifer N. Jones-Baro]{Department of Mathematics, Northwestern University, Evanston IL 60208}
\email{jenniferjones2024@u.northwestern.edu}

\author[Leng]{James Leng}
\address[James Leng]{Department of Mathematics\\
     University of California, Los  Angeles \\ Berkeley, CA 90095, USA.}
\email{jamesleng100@gmail.com}

\author[Mandelshtam]{Yelena Mandelshtam}
\address[Yelena Mandelshtam]{
     University of California, Berkeley, USA.}
\email{yelena@math.berkeley.edu}

\author[Silva]{Cesar E. Silva}
\address[Cesar E. Silva]{Department of Mathematics\\
     Williams College \\ Williamstown, MA 01267, USA.}
\email{csilva@williams.edu}

\subjclass[2010]{Primary 37A40; Secondary
37A05, 
37A50} 
\keywords{Infinite measure-preserving, ergodic, rank-one}

\maketitle
\begin{abstract}
  For a class of irrational numbers, depending on their Diophantine properties, we construct explicit rank-one transformations that are totally ergodic and not weakly mixing and classify when the measure is finite or infinite. In the finite case they are isomorphic to irrational rotations, giving explicit  rank-one cutting and spacer parameters for these irrational rotations. In the infinite case we use the constructions to provide  examples of non-weakly mixing infinite measure-preserving ergodic transformations which do not have any nontrivial probability preserving factors with discrete spectrum, thereby answering a questions of Aaronson and Nakada and of Glasner and Weiss.  We also obtain nonsingular versions of these examples for  each Krieger ration set.
\end{abstract}

\section{Introduction}

     In \cite{dJ76b}, del Junco proved  that (finite measure-preserving) discrete spectrum transformations are rank-one transformations; the main step in his proof was to show  that irrational rotations are rank-one. Rank-one transformations are transformations that are well-approximated by Rokhlin columns and have been a source of examples and counterexamples in ergodic theory; they are known to be generic in the group of finite measure-preserving transformations \cite{Fe97}, and  genericity in the infinite measure case  has  been verified recently   in \cite{BSSSW15}.  In a different paper, and before proving that irrational rotations are rank-one, del Junco showed that  a rotation by an irrational number that is well approximable is rank-one \cite{dJ76a}. In  \cite{dJ76a}, del Junco 
 also showed that for each  irrational $\alpha$ in a specific class  satisfying some approximation properties,  there is a rank-one transformation $T_\alpha$, with an explicit  cutting and stacking    construction, such that $T_\alpha$ has $e^{2\pi i\alpha}$ as an eigenvalue. His main goal was to show that these  irrational rotations  were factors  of rank-one transformations (this was before del Junco showed that factors of rank-one transformations 
are rank-one \cite[3.2]{dJ77}).  An interesting part of this proof is that the transformation $T_\alpha$ has an explicit cutting and stacking construction. 
Proofs that irrational rotations are rank-one such as del Junco's original proof \cite{dJ76b}, and later ones such as the one by Iwanic \cite{Iw94}, do not give explicit cutting and stacking parameters.

As is well known, there exist several definitions for rank-one transformations \cite{Fe97}. One constructive definition
uses a sequence of cutting parameters $(r_n)_{n\geq 0}$ and spacer parameters $s(i,j), i\geq 0, j=0,\ldots, r_n-1$. When such a sequence is given, which is known \emph{not} to be unique, and the transformation is defined on intervals,   we will say that there   is an explicit  cutting and stacking construction for the rank-one transformation. As far as we know, there is no algorithm for constructing the cutting and spacer parameters of a  transformation that is known to be rank-one by the abstract definition. 

In this paper we first extend del Junco's construction in \cite{dJ76a} to give, for each irrational $\alpha$ in a set of full measure, that we call not of golden type, an explicit cutting and stacking construction of a rank-one transformation $T_\alpha$  that depends on the continued fraction 
expansion of $\alpha$. In some cases the resulting rank-one transformation is infinite measure-preserving (this is different from del Junco's construction as he arranges it so that it is always on a space of finite measure). 
\thref{finitude} gives a condition depending on the irrational number for when the rank-one construction is defined  on a  finite or infinite measure space.  \thref{toterg} shows that all of these transformations are totally ergodic (rank-one transformations are ergodic but not necessarily totally ergodic).

One of our main results is that for each such irrational $\alpha$, if $f_\alpha$ is the eigenfunction of $T_\alpha$ corresponding to the 
eigenvalue $e^{2\pi i\alpha}$, then $f_\alpha $ is injective (\thref{injective}).  A consequence is that the rank-one transformation  $(T_\alpha,\mu)$ is isomorphic to rotation by $e^{2\pi i\alpha}$, $R_\alpha$, with the pushed measure $f_\alpha^*\mu$ on $\mathbb S^1$. For values of $\alpha$ for which $\mu $ is a  finite measure it follows that $f_\alpha^*\mu$ is  Lebesgue measure (or a multiple of),  since it is invariant for an irrational rotation; in this case we obtain explicit cutting and stacking constructions for the corresponding irrational rotation $R_\alpha$, i.e., we obtain cutting and spacer parameters for a rank-one construction that is isomorphic to an irrational rotation. In this context we mention a recent result in \cite{FGHSW} that classifies when a rank-one transformation is isomorphic to an odometer, or has an odometer factor, solely based on a condition on the cutting and spacer parameters of the rank-one transformation.     In the case when $\mu$ is infinite we obtain a Borel $\sigma$-finite nonatomic totally ergodic measure for an irrational rotation that is isomorphic to an explicit rank-one construction. 

We use this construction to answer questions in Aaronson--Nakada \cite{AaNa87} and  Glasner--Weiss  \cite{GlWe16}.  As is now well-known, many conditions that are equivalent to the weak mixing property for finite measure-preserving transformations do not necessarily remain equivalent in infinite measure; the reader may refer to \cite{AdSi18} for a survey of the weak mixing property for infinite measure-preserving and nonsingular transformations. In Section \ref{discretespectrum} we show  that there exist  nonsingular transformations, in fact infinite measure-preserving, that are ergodic and not weakly mixing but with no nontrivial factor that is finite measure-preserving with discrete spectrum, answering questions in Aaronson--Nakada \cite{AaNa87} and  Glasner--Weiss  \cite{GlWe16}. As is well-known, these examples  cannot exist in the finite measure-preserving case, see e.g., \cite{Ru90}.

We recall that the existence of infinite invariant measures on rotations have been known for a long time. 
Schmidt\cite{Sc77}  used Dye equivalence of group actions to show the existence of uncountably many $\sigma$-finite infinite nonatomic ergodic measures for irrational rotations. We 
construct explicit rank-one infinite measure-preserving transformations that we show are isomorphic to an irrational rotation with an infinite invariant measure. In \thref{rigid}, we show that $T_\alpha$ is rigid if and only if $\alpha$ is well approximable. In particular, for each $\alpha$ that is badly approximable we obtain a nonatomic $\sigma$-finite rank-one infinite invariant measure for $R_\alpha$ that is not rigid.  
This gives  infinite nonrigid ergodic, rank-one, invariant measures for some irrational rotations (for badly approximable $\alpha$).

We also consider  nonsingular versions of this construction. In fact, for each $\lambda\in [0,1]$, we construct 
a type III$_\lambda$ rank-one transformation; this yields nonsingular  type III$_\lambda$ rank-one  
rank-one measures on irrational rotations. We recall that Keane \cite{Ke71} constructed, for each irrational rotation, uncountably many inequivalent nonsingular measures on the circle.   We obtain a refinement of that result by showing that we obtain such 
a nonsingular measure for each Krieger type III$_\lambda$. We note that El Abdalaoui  has informed the authors that using methods from \cite{ElNa16} and his work he can obtain measures that  are spectrally disjoint for different $\alpha$,
and he has remarked to us that 
Keane's construction was generalized by Host, M\'ela, and Parreau \cite{HMP86}. For further information on eigenvalues of transformations we refer to \cite{Na98}.

\textbf{Acknowledgments:} This paper is based on research by the ergodic theory group of the 2018 SMALL undergraduate research project at Williams College. Support for the project was provided by National Science Foundation grant DMS-1659037 and the Science Center of Williams College. We thank Steven J. Miller for bringing   \thref{gausscor} to our attention, and Terrence Adams for suggesting reference \cite{AOW85}.   C. S.  would like to thank  El H. El Abdalaoui  for delightful visits to the University of Rouen Normandy where we discussed nonsingular transformations and Keane's examples, and for comments on an earlier version of this paper; C.S.  also benefitted from discussions on rank-one transformations with M. Foreman, S. Gao, A. Hill, and B. Weiss as part of a SQuaRe program at AIM. Since August 2019 until August 2021, C.S. has been serving as a Program Director in the Division of Mathematical Sciences at the National Science Foundation (NSF), USA, and as a component of this job, he received support from NSF for research, which included work on this paper. Any opinion, findings, and conclusions or recommendations expressed in this material are those of the authors and do not necessarily reflect the views of the National Science Foundation.

\section{Preliminaries}

Throughout this paper  $(X, \frk B, \mu)$ will  be a $\sigma$-finite, nonatomic standard Borel measure space.
We will assume all of our transformations are measurable and invertible.  A (measurable, invertible) transformation $T: X \to X$ is \textbf{measure-preserving} if for all $A \in \frk B$, we have $\mu(T^{-1}(A)) = \mu(A)$, and  \textbf{nonsingular} if for all $A \in \frk B$, $\mu(T^{-1}(A)) = 0$ if and only if $\mu(A) = 0$. A nonsingular transformation is \textbf{ergodic} if $T^{-1}A=A$ implies $\mu(A)=0$ or $\mu(A^c)=0$. $T$ is \textbf{totally ergodic} if $T^n$ is ergodic for all integers $n\neq0$. As our measures are nonatomic and the transformations are invertible, ergodic implies conservative (i.e., for all $A$ of positive measure we have $\mu(T^{-n}A\cap A)>0$ for some $n>0$). Finally, if $X\subset \br$, we will work exclusively with Lebesgue measure on the real line, and denote this measure by $m$.

\subsection{Rank-One Transformations}
\hfill \break
As remarked earlier, rank-one transformations play a central role in ergodic theory and this concept is central to our paper. We proceed to provide the definition of rank-one that we will use.

\defin{
A \textbf{column} of $T$ is a finite ordered collection of subsets $$C_k=\{C_k(0), C_k(1),\dots,C_k(h_k-1)\}$$ of $X$ of finite measure such that $T(C_k(i))=C_{k}(i + 1)$ for $0\leq i<h_k-1$. $C_k(0)$ is called the \textbf{base} of the $k$th column, $C_k(i)$ is the $i$-th \textbf{level} of the $k$th column, and $h_k$ is the \textbf{height} of the $k$th column.  }

\defin{
A transformation $T$ is \textbf{rank-one} if there exists an an ordered collection of columns $\{C_k\}_{k=1}^{\infty}$ and $\{C_{k}(0), C_{k}(1), ..., C_{k}(h_k-1)\}$ its levels, such that for every measurable set $A\subset X$ with positive nonzero measure and every $\varepsilon>0$ there exists  $N\neq0$ such that for every $j\geq N$ there exists a set $\widehat{C}_j$, a union  of some levels of $C_j$, such that
$$\mu(\widehat{C}_j\triangle A)<\varepsilon.$$
}

A very nice property of rank-one transformations is that they are isomorphic to transformations which can be explicitly constructed via the so called \textbf{cutting and stacking} method. Famous examples of cutting and stacking transformations include the Chac\'on and Kakutani transformations. See \cite{Si08} for a discussion of these and other examples.

The following follows from \cite{AOW85}. 

\propos{All rank-one transformations have an isomorphic representation through cutting and stacking.
}
\subsection{Weak Mixing}

\defin{\thlabel{eigendef} Let $T$ be a nonsingular  transformation.  A number $\lambda \in \mathbb{C}$ is an L$^\infty$ \textit{eigenvalue} of $T$ if there exists a nonzero a.e. function $f \in L^{\infty}$ such that 
\[f(T(x)) = \lambda f(x) \, \text{ a.e.}.\]
$f$ is called an \textbf{eigenfunction} of $T$. An eigenvalue $\lambda$ is a \textbf{rational eigenvalue} if there exists an $n \neq 0$ such that $\lambda^n = 1$. Otherwise, $\lambda$ is an \textbf{irrational eigenvalue}.
}
It can be shown that eigenvalues have modulus $1$. Also, when $T$ is ergodic its eigenfunctions have constant modulus, so when the measure is finite the eigenfunctions are in $L^2$. \\

 The following is well known (see e.g. \cite{Si08} for the outline of an argument that also works in the nonsingular case). 

\begin{prop}
An ergodic transformation $T$ is totally ergodic if and only if it has no rational eigenvalues other than $1$.
\end{prop}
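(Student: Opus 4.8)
The plan is to prove both directions by connecting rational eigenvalues to the ergodicity of powers of $T$. First I would handle the easy contrapositive: suppose $T$ has a rational eigenvalue $\lambda \neq 1$ with eigenfunction $f$, so $\lambda^n = 1$ for some minimal $n > 1$. Then $f \circ T^n = \lambda^n f = f$, so $f$ is $T^n$-invariant. Since $\lambda$ has minimal order $n > 1$, $f$ is not a.e. constant (an ergodic $T$ forces eigenfunctions to have constant modulus, but $f$ itself cycles through the distinct values $\lambda^k$ on the sets where a fixed value is attained), so the level sets of $f$ give a nontrivial $T^n$-invariant partition; hence $T^n$ is not ergodic and $T$ is not totally ergodic. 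The one point needing a little care in the nonsingular setting is arguing that a non-constant $T^n$-invariant measurable function yields a $T^n$-invariant set of intermediate measure — but this is standard: pick a value $c$ such that $\{|f| \le c\}$ is $T^n$-invariant and has positive measure and positive complement, using that $f$ takes at least two values on a positive-measure set.

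For the converse, suppose $T$ is ergodic but not totally ergodic, so $T^n$ is not ergodic for some $n \ge 2$; take the least such $n$. Then there is a $T^n$-invariant set $A$ with $\mu(A) > 0$ and $\mu(A^c) > 0$. I would consider the sets $A, T^{-1}A, \dots, T^{-(n-1)}A$; by ergodicity of $T$ their union is (mod null) all of $X$, and one shows these are the distinct "fibers" of a $\ZZ/n\ZZ$-factor. The key construction is to build an eigenfunction: essentially set $f = \sum_{k=0}^{n-1} \lambda^{k} \indic{B_k}$ where $\lambda = e^{2\pi i/d}$ for an appropriate divisor $d \mid n$ with $d > 1$ and $\{B_k\}$ is the partition of $X$ into the pieces permuted cyclically by $T$. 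Checking $f \circ T = \lambda f$ is then a direct computation, and $\lambda$ is a rational eigenvalue $\neq 1$.

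The main obstacle is the converse direction, specifically the passage from "$T^n$ non-ergodic" to "there is a genuine cyclic factor of order $d > 1$." The subtlety is that a single $T^n$-invariant set $A$ need not be permuted cleanly by $T$ — its $T$-translates $T^{-k}A$ may overlap. The standard fix, which I would carry out, is to pass to the $\sigma$-algebra $\mathcal F$ of $T^n$-invariant sets and observe that $T$ acts on $(X,\mathcal F)$; since $T^n$ acts trivially there while $T$ does not (as $T$ is ergodic but $T^n$ is not), the induced system is a nontrivial finite rotation, i.e.\ isomorphic to a cyclic rotation on $\{0,1,\dots,d-1\}$ for some $d \mid n$, $d > 1$. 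Pulling back the obvious eigenfunction $k \mapsto e^{2\pi i k/d}$ of that rotation gives the desired eigenfunction of $T$. This is exactly the argument outlined in \cite{Si08}, and the nonsingular case goes through verbatim because none of these steps uses measure-preservation — only ergodicity, conservativity (automatic here), and the structure of invariant $\sigma$-algebras. I would cite \cite{Si08} for the details and simply indicate this outline.
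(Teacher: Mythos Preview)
The paper does not actually prove this proposition; it simply states it as well known and cites \cite{Si08} for an outline that works in the nonsingular case. Your proposal supplies precisely the standard argument one finds in such a reference, so in that sense your approach agrees with the paper's (deferred) proof.

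One small slip worth fixing: in the forward direction you suggest using a set of the form $\{|f| \le c\}$ to produce a nontrivial $T^n$-invariant set, but you have already (correctly) noted that ergodicity of $T$ forces $|f|$ to be constant a.e., so this particular set is trivial. Use instead a genuine level set of $f$, e.g.\ $f^{-1}(B)$ for a suitable Borel $B \subset \bs^1$; since $f$ is $T^n$-invariant and non-constant (as $\lambda \neq 1$), such a $B$ exists with $0 < \mu(f^{-1}(B))$ and $0 < \mu(f^{-1}(B)^c)$. With that correction, both directions of your outline are sound, and your handling of the converse via the $T^n$-invariant $\sigma$-algebra and the resulting cyclic factor is exactly the right way to deal with the overlap issue you flagged.
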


\defin{Let $T$ be a nonsingular transformation. We say 
$T$ is \textbf{weakly mixing} if all eigenfunctions for $T$ are constant a.e.. This is equivalent to $T$ being ergodic 
and having $1$ as its only eigenvalue.
}

In the finite measure-preserving case there are several equivalent characterizations of weak mixing (see e.g., \cite{Ru90, Si08}).
In the infinite measure-preserving and nonsingular cases the situation is quite different and the reader may refer to 
\cite{AdSi18,GlWe16}. A weakly mixing transformation is totally ergodic. 

\subsection{Diophantine Approximations}

\hfill \break
 As is well known, every irrational number $\alpha$ can be uniquely described as a continued fraction $[a_0; a_1, a_2,...]$ where the $a_k$ are called the \textbf{coefficients} of the continued fraction. The rational numbers $\frac{p_k}{q_k} := [a_0; a_1, ..., a_{k-1}] $ are called the \textbf{convergents} for $\alpha$. The sequence of convergents provides a best Diophantine approximation of $\alpha$ by rational numbers. We can express the convergents in terms of the coefficients by the following recursive formulas, for $k\geq 2$,
\begin{align} 
    \nonumber p_k = a_{k-1}p_{k-1} + p_{k-2}, \hspace{0.5in} &p_0 = 1,   p_1 = a_0 \\
    q_k = a_{k-1}q_{k-1} + q_{k-2}, \hspace{0.5in}  &q_0 = 0, q_1 = 1.\label{b}
\end{align}

\begin{prop}\thlabel{a}
For each $n \ge 1$, if $\frac{p_n}{q_n} > \alpha$, then $\frac{p_{n + 1}}{q_{n + 1}} < \alpha$ and if $\frac{p_n}{q_n} < \alpha$, then $\frac{p_{n + 1}}{q_{n + 1}} > \alpha$. 
\end{prop}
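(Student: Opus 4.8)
The proposition is equivalent to the statement that the consecutive convergents $p_n/q_n$ and $p_{n+1}/q_{n+1}$ lie on opposite sides of $\alpha$; since $\alpha$ is irrational it equals neither rational number, so ``opposite sides'' is precisely the dichotomy being asserted. The plan is to write down exact formulas for $\alpha - p_n/q_n$ and $\alpha - p_{n+1}/q_{n+1}$ whose numerators are negatives of one another up to a positive factor and whose denominators are positive.

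Two standard ingredients are needed. First, the determinant identity coming from \raf{b}: substituting the recursions gives $p_{k+1}q_k - p_k q_{k+1} = -(p_k q_{k-1} - p_{k-1}q_k)$ for $k\ge 1$, and since $p_1 q_0 - p_0 q_1 = -1$, induction yields $p_{k+1}q_k - p_k q_{k+1} = (-1)^{k+1}$ for all $k\ge 0$; in particular this never vanishes. (Also the $q_j$ are strictly positive for $j\ge 1$, immediately from \raf{b} since $a_j\ge 1$ for $j\ge 1$.) Second, I use the complete quotients $\alpha_k := [a_k; a_{k+1}, a_{k+2}, \dots]$, which satisfy $\alpha_k > 1$ for $k\ge 1$ and $\alpha_k = a_k + 1/\alpha_{k+1}$; the same induction on $k$ that establishes \raf{b}, started from $\alpha = a_0 + 1/\alpha_1 = (\alpha_1 p_1 + p_0)/(\alpha_1 q_1 + q_0)$, gives
\[
\alpha = \frac{\alpha_k p_k + p_{k-1}}{\alpha_k q_k + q_{k-1}} \qquad (k\ge 1).
\]

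Now fix $n\ge 1$ and apply this with $k = n+1$. A direct computation, using the determinant identity to simplify the numerators, gives
\[
\alpha - \frac{p_n}{q_n} = \frac{\alpha_{n+1}(p_{n+1}q_n - p_n q_{n+1})}{q_n(\alpha_{n+1}q_{n+1} + q_n)} = \frac{(-1)^{n+1}\alpha_{n+1}}{q_n(\alpha_{n+1}q_{n+1} + q_n)},
\]
\[
\alpha - \frac{p_{n+1}}{q_{n+1}} = \frac{p_n q_{n+1} - p_{n+1}q_n}{q_{n+1}(\alpha_{n+1}q_{n+1} + q_n)} = \frac{(-1)^n}{q_{n+1}(\alpha_{n+1}q_{n+1} + q_n)}.
\]
Since $q_n, q_{n+1} > 0$ and $\alpha_{n+1} > 0$, both quantities are nonzero and of opposite sign. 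Hence $p_n/q_n > \alpha$ forces $\alpha - p_{n+1}/q_{n+1} > 0$, i.e.\ $p_{n+1}/q_{n+1} < \alpha$, and symmetrically $p_n/q_n < \alpha$ forces $p_{n+1}/q_{n+1} > \alpha$, which is the claim.

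The only delicate point is bookkeeping with the indexing convention here: the $n$-th convergent $p_n/q_n$ is built from the coefficients $a_0, \dots, a_{n-1}$ with $q_0 = 0$, shifting the textbook formulas by one, so one must check that the complete-quotient identity is applied at an index $k = n+1 \ge 2$ where the relevant $q_j$ are positive. Everything else is the routine induction of the theory of continued fractions, and one could alternatively just cite Khinchin.
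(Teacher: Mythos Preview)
Your argument is correct: the determinant identity $p_{k+1}q_k - p_k q_{k+1} = (-1)^{k+1}$ follows from the recursion \raf{b} exactly as you describe, the complete-quotient formula $\alpha = (\alpha_k p_k + p_{k-1})/(\alpha_k q_k + q_{k-1})$ is verified by the same induction, and the two displayed expressions for $\alpha - p_n/q_n$ and $\alpha - p_{n+1}/q_{n+1}$ are computed correctly and visibly have opposite sign. You were also right to flag the index shift in this paper's convention (convergent $p_k/q_k = [a_0;a_1,\dots,a_{k-1}]$, $q_0=0$), and your application at $k=n+1\ge 2$ is consistent with it.

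There is nothing to compare against: the paper does not prove this proposition. It is listed in the preliminaries as a standard fact from the theory of continued fractions and used without argument (for instance in \thref{irrLem} and in Section~\ref{one-one}). Your write-up is a perfectly good textbook proof; the closing remark that one could ``just cite Khinchin'' is, in effect, what the paper does.
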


\defin{An irrational number $\alpha$ is \textbf{badly approximable} if its continued fraction coefficients $(a_k)$ are bounded. Numbers that are not badly approximable are called \textbf{well approximable}.
}

\begin{prop}
The set of badly approximable numbers has measure zero.
\end{prop}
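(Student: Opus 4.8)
The plan is to show that for Lebesgue-almost every irrational $\alpha$ the continued fraction coefficients $(a_k)$ are unbounded; the set of badly approximable numbers is then null. Since boundedness of $(a_k)$ is unaffected by adding an integer to $\alpha$, it suffices to work with $\alpha\in(0,1)$, and since the badly approximable numbers in $(0,1)$ form $\bigcup_{M\geq 1}B_M$ with $B_M:=\{\alpha\in(0,1): a_k(\alpha)\leq M \text{ for all }k\geq 1\}$, it is enough to prove $m(B_M)=0$ for each fixed $M$.

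The tool is the Gauss map $G\colon[0,1)\to[0,1)$, $G(x)=1/x-\lfloor 1/x\rfloor$ (with $G(0)=0$), which generates the continued fraction digits in the sense that $a_k(\alpha)=\lfloor 1/G^{k-1}(\alpha)\rfloor$ for all $k\geq 1$ and a.e. $\alpha$. It is classical that $G$ preserves the Gauss measure $d\mu=\frac{1}{\ln 2}\frac{dx}{1+x}$, that $\mu$ is equivalent to Lebesgue measure $m$ on $[0,1)$, and that $G$ is ergodic with respect to $\mu$; I would cite a standard reference for these facts. The key observation is that $a_k(\alpha)\geq M+1$ is equivalent to $G^{k-1}(\alpha)\in A_M$, where $A_M:=(0,\tfrac{1}{M+1})$.

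Now $\mu(A_M)>0$, so applying the Birkhoff ergodic theorem to the indicator $\mathbf 1_{A_M}$ gives, for $\mu$-a.e.\ $\alpha$,
\[
\lim_{N\to\infty}\frac{1}{N}\#\{\,0\leq k<N : G^{k}(\alpha)\in A_M\,\}=\mu(A_M)>0 ,
\]
so in particular $G^{k}(\alpha)\in A_M$ for infinitely many $k$, i.e.\ $a_{k}(\alpha)\geq M+1$ for infinitely many $k$, and hence $(a_k(\alpha))$ is unbounded. Since $\mu$ and $m$ are equivalent, the same holds for $m$-a.e.\ $\alpha\in(0,1)$; thus $m(B_M)=0$, and therefore the badly approximable set $\bigcup_{M\geq1}B_M$, together with its countably many integer translates, has Lebesgue measure zero.

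The only substantive ingredient is the ergodicity of the Gauss map, which is the classical part of Gauss--Kuzmin theory; if one prefers to avoid invoking it, the same conclusion follows from the Borel--Bernstein theorem, or from a direct estimate showing that $m\big(B_M\cap[a_1,\dots,a_{n-1}]\big)\leq \theta\, m\big([a_1,\dots,a_{n-1}]\big)$ for a constant $\theta=\theta(M)<1$ on each continued-fraction cylinder, whence $m(B_M)\leq\theta^{\,n}\to 0$. I would expect the bookkeeping in that self-contained estimate --- controlling the cylinder lengths through the recursion $q_n=a_nq_{n-1}+q_{n-2}$ --- to be the only place requiring care.
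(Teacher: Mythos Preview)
Your argument is correct. The reduction to $B_M$ for fixed $M$, the identification of the event $\{a_k\ge M+1\}$ with $\{G^{k-1}\alpha\in(0,1/(M+1))\}$, and the appeal to ergodicity of the Gauss map with respect to the Gauss measure (equivalent to Lebesgue) via Birkhoff's theorem constitute the standard proof of this classical fact. The alternative routes you mention (Borel--Bernstein, or a direct cylinder-shrinking estimate $m(B_M\cap[a_1,\ldots,a_n])\le\theta(M)^n$) are also standard and would work equally well.

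There is nothing to compare against, however: the paper states this proposition as a known result in the preliminaries and offers no proof at all. So your write-up is not an alternative to the paper's argument but rather a proof that the paper omits. If you keep it, citing a standard reference for the ergodicity of the Gauss map (e.g., Einsiedler--Ward, \emph{Ergodic Theory with a View towards Number Theory}, or Khinchin's \emph{Continued Fractions}) would be appropriate, as you yourself note.
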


\defin{An irrational $\al$ is of \textbf{golden type} if there exists an $n$, such that for all $k \geq n$, $a_k= 1$.}
The golden ratio is of golden type. Observe that the set of golden type numbers is a countable subset of the badly approximable numbers.

\begin{lemma}\thlabel{irrLem}
Suppose $\al$ is an irrational number. Let $\left(\frac{p_k}{q_k}\right)$ be its continued fraction convergents, and define  $ \eps_k := \left|\al- \frac{p_k}{q_k}\right|$. Then 
$$\sum_{k=1}^\infty \eps_kq_{k+1}<\infty.$$
\end{lemma}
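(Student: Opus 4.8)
The plan is to reduce the series to a convergent geometric-type sum by combining two standard facts about continued fractions: the elementary rational-approximation estimate, and the (at least) exponential growth of the denominators $q_k$.

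First I would record the determinant identity $p_{k+1}q_k - p_k q_{k+1} = (-1)^{k}$ (the precise sign is irrelevant here), which follows by a one-line induction from the recursions in \eqref{b}. Dividing by $q_k q_{k+1}$ gives
\[
\left|\frac{p_{k+1}}{q_{k+1}} - \frac{p_k}{q_k}\right| = \frac{1}{q_k q_{k+1}}.
\]
By \thref{a}, the convergents $\frac{p_k}{q_k}$ and $\frac{p_{k+1}}{q_{k+1}}$ lie on opposite sides of $\alpha$, so $\alpha$ lies strictly between them. Hence
\[
\eps_k = \left|\alpha - \frac{p_k}{q_k}\right| < \left|\frac{p_{k+1}}{q_{k+1}} - \frac{p_k}{q_k}\right| = \frac{1}{q_k q_{k+1}},
\]
and therefore $\eps_k q_{k+1} < \frac{1}{q_k}$ for every $k \ge 1$.

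Next I would show that $q_k$ grows at least geometrically. Since every coefficient $a_k \ge 1$, the recursion in \eqref{b} gives $q_{k+2} = a_{k+1}q_{k+1} + q_k \ge q_{k+1} + q_k \ge 2q_k$ for all $k \ge 1$. Together with $q_1 = 1$ and $q_2 = a_1 \ge 1$, a trivial induction then yields $q_k \ge 2^{\lfloor (k-1)/2\rfloor}$, so $\sum_{k \ge 1} 1/q_k$ is bounded above by a convergent geometric series.

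Combining the two estimates,
\[
\sum_{k=1}^{\infty} \eps_k q_{k+1} \;\le\; \sum_{k=1}^{\infty} \frac{1}{q_k} \;<\; \infty,
\]
which is the assertion. I do not expect any genuine obstacle; the only points needing care are keeping track of the shifted indexing in \eqref{b} (so that $\frac{p_k}{q_k}$ and $\frac{p_{k+1}}{q_{k+1}}$ are genuinely consecutive convergents, to which \thref{a} applies) and handling the base cases for the lower bound on $q_k$ — and in the one borderline case $q_2 = a_1$ could be large, which only strengthens the bound.
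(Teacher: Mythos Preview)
Your proof is correct and follows essentially the same route as the paper: bound $\eps_k$ by $\left|\frac{p_{k+1}}{q_{k+1}} - \frac{p_k}{q_k}\right| = \frac{1}{q_k q_{k+1}}$ using the alternation of convergents, then conclude from the exponential growth of $q_k$. You simply spell out the determinant identity and the growth bound $q_{k+2}\ge 2q_k$ explicitly, where the paper just writes ``by properties of continued fractions'' and ``since the $q_k$ grow exponentially.''
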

\begin{proof}
By \thref{a} the fractions $\left( \frac{p_k}{q_k}\right)$ alternate being greater and less than $\al$. Thus, we have
\begin{align*}
    \eps_k &= \left|\alpha - \frac{p_k}{q_k}\right|\\
    & < \left|\frac{p_{k+1}}{q_{k+1}} - \frac{p_k}{q_k}\right|\\
    & = \left| \frac{p_{k+1}q_k - p_kq_{k+1}}{q_{k+1}q_k} \right|\\
    & = \frac{1}{q_kq_{k+1}}
\end{align*}
by properties of continued fractions.
Therefore, $\sum \eps_kq_{k+1} \leq \sum \frac{1}{q_k} < \infty$ since the $q_k$ grow exponentially.
\end{proof} 

\section{Survey of results}\label{survey}

Our central result can be summarized in the following theorem.

\begin{thm}\thlabel{mainthm}
Suppose $\al$ is an irrational number that is  not of golden type. Then we  explicitly construct a rank-one transformation $T_\al$ on a Borel set $X_\al$ in $\br$  with the following properties.
\begin{enumerate}
    \item There is an $L^\infty$ eigenfunction $f_\al$ with eigenvalue  $e^{2\pi i \al}$. Hence, $T_\al$ is not weakly mixing.
    \item The eigenfunction is injective almost everywhere.
    \item  $T_\al$ is totally ergodic.
    \item If $X_\al$ has infinite measure and $\al$ is badly approximable, then every probability preserving factor of $T_\al$ is weakly mixing.
    \item $T_\al$ is nonrigid for badly approximable numbers, and rigid otherwise.
\end{enumerate}
Moreover, there exist uncountably many $\al$ so that $X_\al$ has finite measure, but for Lebesgue almost every irrational $\al$, $X_\al$ is an infinite measure subset of $\br$.  Thus we obtain a type $II_\infty$ transformation with properties (1)-(3).
\end{thm}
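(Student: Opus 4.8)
The plan is to build $T_\al$ explicitly by cutting and stacking, reading the cutting and spacer parameters off the continued fraction expansion $\al=[a_0;a_1,a_2,\dots]$ and extending del Junco's construction in \cite{dJ76a}. Concretely: start with an interval as the base of $C_1$; at stage $k$ (column of height $q_k$) cut $C_k$ into $r_k=a_k$ subcolumns of equal width, stack them left to right, and insert a prescribed number $S_k$ of spacer intervals between and above consecutive subcolumns so that the new column $C_{k+1}$ has height $q_{k+1}=a_kq_k+q_{k-1}$ and so that the ``vertical coordinate'' on $C_{k+1}$ implements, to within an error of order $q_{k+1}\eps_k$ (where $\eps_k=|\al-p_k/q_k|$), the rotation by $\al$ on $\br/\bz$; on which side the spacers sit is dictated by the sign of $p_k/q_k-\al$, which alternates by Proposition \thref{a}. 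Unlike del Junco, I would not renormalise the spacer counts to force the total measure to be finite, which is precisely what lets infinite-measure examples occur. Since there is a single column at each stage and the levels generate the $\sigma$-algebra, $T_\al$ is rank-one, hence ergodic; and since $X_\al\subset\br$ carries Lebesgue measure and $T_\al$ translates levels, $T_\al$ is (finite or infinite) measure-preserving.

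For item (1), let $f_{\al,k}$ be constant on each level $C_k(i)$ with value $e^{2\pi i\beta_k(i)}$, where $\beta_k(i)\in\br/\bz$ records the rotation coordinate of the $i$-th level of $C_k$; by construction $f_{\al,k}\circ T_\al=e^{2\pi i\al}f_{\al,k}$ off the top level of $C_k$, and $f_{\al,k+1}-f_{\al,k}$ is supported on the part of $C_k$ touched by the stage-$k$ spacers, with sup norm $O(q_{k+1}\eps_k)$. By Lemma \thref{irrLem}, $\sum_k\eps_kq_{k+1}<\infty$, so $f_{\al,k}\to f_\al$ uniformly a.e., with $f_\al\in L^\infty$ and $f_\al\circ T_\al=e^{2\pi i\al}f_\al$; as $f_\al$ is nonconstant, $T_\al$ is not weakly mixing. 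Item (2) is Theorem \thref{injective}: one checks the finite-stage coordinate maps are injective on the tower and that, because the correction series converges fast, distinct Lebesgue points of $X_\al$ have $f_\al$-values that are eventually separated; this also shows $x\mapsto f_\al(x)$ is a measurable isomorphism of $(T_\al,\mu)$ onto $(R_\al,f_\al^*\mu)$.

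For item (3), $T_\al$ is ergodic, so by the proposition relating total ergodicity to rational eigenvalues it suffices to show $T_\al$ has no nontrivial root of unity as an eigenvalue; I would argue this directly from the Rokhlin-tower structure (Theorem \thref{toterg}): $T_\al^n$ carries $C_k(i)$ to $C_k(i+n)$ until it exits the top, and each passage through the top shifts the level index by a spacer count that is generically not $\equiv0\pmod n$, so iterating over many stages connects all residues mod $n$ and forces $T_\al^n$ to be ergodic. For item (4) and the measure dichotomy: $T_\al^{q_{k+1}}$ moves a point of $C_k$ by one full copy of $C_k$ together with the stage-$k$ spacers, hence is close to the identity on $C_k$ exactly when the stage-$k$ spacer density is small, i.e.\ when $a_{k+1}$ is large; if $\al$ is well approximable this occurs along a subsequence and gives a rigidity sequence, whereas if $\al$ is badly approximable the spacer density stays bounded away from $0$ and one shows every candidate rigidity sequence is essentially a multiple of some $q_j$ and therefore fails --- this is Theorem \thref{rigid}. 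Finally, the total measure equals $\mu(X_\al)=\mu(C_1(0))\,\lim_k h_k/(r_1\cdots r_{k-1})$, an explicit series in the continued fraction data; Theorem \thref{finitude} identifies when it converges, it does converge for suitably fast-approximable $\al$ (finite measure, in which case $f_\al^*\mu$ is Lebesgue and we recover an explicit cutting and stacking model of $R_\al$), and, using that for Lebesgue-a.e.\ $\al$ the coefficients $a_k$ are large infinitely often (via the Gauss-measure correlation input \thref{gausscor} and a Borel--Cantelli argument), it diverges for a.e.\ $\al$; together with ergodicity, nonatomicity, and invertibility (hence conservativity) this yields a type $II_\infty$ transformation with (1)--(3).

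The two hard steps are Theorem \thref{rigid} in the badly approximable direction, where one must exclude \emph{all} rigidity sequences rather than just the natural heights $q_k$, and the injectivity of $f_\al$ (Theorem \thref{injective}), where the summability of $\sum_k\eps_kq_{k+1}$ must be exploited carefully to keep the limiting coordinate from identifying distinct points. By comparison the a.e.\ infinite-measure statement is soft once the series formula for $\mu(X_\al)$ is in hand, requiring only a standard Borel--Cantelli/ergodic-theoretic estimate for the Gauss map.
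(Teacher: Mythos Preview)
Your overall architecture matches the paper's: the construction, the eigenfunction as an $L^\infty$ limit controlled by $\sum_k\eps_kq_{k+1}<\infty$, and the deferral of (2), (3), (4) and the measure dichotomy to the named theorems. A few points, however, are either inaccurate or diverge from the paper in ways worth flagging.

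\textbf{A genuine error in the measure dichotomy.} You write that divergence of the measure series follows because ``for Lebesgue-a.e.\ $\al$ the coefficients $a_k$ are large infinitely often.'' This is backwards. The paper's criterion (\thref{finitude}) is that $m(X_\al)=\infty$ iff $\sum_k 1/(a_ka_{k-1})=\infty$, so \emph{large} $a_k$ push toward convergence (finite measure), not divergence. The paper's a.e.\ argument goes the other way: by the Gauss--Kuzmin correlation estimate (\thref{gausscor}), for a.e.\ $\al$ one has $a_{k-1}=a_k=1$ for infinitely many $k$, contributing infinitely many terms equal to $1$ and forcing divergence. Your Borel--Cantelli idea is right, but the event you want is ``consecutive coefficients are both $1$,'' not ``coefficients are large.''

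\textbf{A methodological difference for total ergodicity.} Your proposed direct combinatorial argument (tracking residues mod $n$ through successive tower passages) is not how the paper proceeds and, as stated, is too vague to be a proof: knowing that \emph{some} spacer count $q_{k-1}\not\equiv 0\pmod n$ does not by itself connect all residue classes. The paper instead proves an eigenvalue constraint (\thref{eigenvals}): any eigenvalue $\lambda$ satisfies $\lambda^{q_j}\to 1$. It then invokes a lemma of Turek type (\thref{relprimecols}): if the heights $h_n$ have a subsequence that is pairwise coprime, this forces any rational eigenvalue to be $1$. Since $\gcd(q_k,q_{k+1})=1$ from the continued fraction recursion, total ergodicity follows. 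This is both shorter and cleaner than a direct ergodicity argument for $T_\al^n$.

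\textbf{Two smaller inaccuracies.} First, in the paper's construction all $q_{k-1}$ spacers go on \emph{top} of the stacked subcolumns; none are placed ``between'' subcolumns as you suggest. Second, $f_{k+1}-f_k$ is not ``supported on the part of $C_k$ touched by the stage-$k$ spacers'': it is nonzero on \emph{all} of $C_k$ (since the level index shifts by $jq_k$ on the $j$th subcolumn), and zero on $X\setminus C_k$. The sup-norm bound $\|f_{k+1}-f_k\|_\infty\le 2\pi\eps_kq_{k+1}$ you quote is still correct, but for the stated reason, not yours.
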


The proof of this theorem is subdivided as follows: in Section \ref{construction}, we construct our transformation and an eigenfunction. Section \ref{finiteOrNot} characterizes when $X_\al$ will have infinite measure and when it will not, and shows that the former occurs for almost every $\al$. Total ergodicity is proven in Section \ref{toteserg}.  Injectivity of $f_\al$ is proven in Section \ref{one-one}, and probability preserving factors are considered in Section \ref{discretespectrum}.  Rigidity questions are taken up in \ref{badlyrigid}.\\\\

Notice that injectivity of $f_\al$ and Blackwell's theorem tell us that $f_\al^{-1}(\mathfrak{B}(\bs^1)) = \mathfrak{B}(X_\al)$.
As such, $\nu = m\circ f_\al^{-1}$ is a measure on $\bs^1$. Moreover, letting    
$$S_0 = \bigcap_{n\in \bz}R^n_\al f_\al(X_\al),\qquad X_0= f_\al^{-1}(S_0),$$
then $\nu(\bs^1\setminus S_0) = 0 = m(X_\al\setminus X_0)$, $f_\al$ maps $X_0$ bijectively onto $S_0$, and $f_\al(T_\al x) = e^{2\pi i \al} f_\al(x) = R_\al(f(x))$. Thus, $f$ is an isomorphism between the dynamical systems $(X_\al, m, T_\al)$ and $(\bs^1, \nu, R_\al)$. \\\\
In other words, letting $\s$ be the standard measure on the circle and $\al$ be such that $X_\al$ has finite measure, then by unique ergodicity of $(\bs^1, \s, R_\al)$, $\nu$  is equal to $\s$ up to a constant multiple, and thus we have explicitly constructed a cutting and stacking representation for an irrational rotation. If $X_\al$ has infinite measure, we have obtained an infinite-measure preserving dynamical system $(\bs^1, \nu, R_\al)$ that shares many of the properties of $(\bs^1, \s, R_\al)$; namely, it is not weakly mixing, it is totally ergodic, it has $e^{2\pi i n\al}$ as eigenvalues and $e_n:z\mapsto z^n$ as eigenfunctions for all $n\in \bz$.\\\\
We are interested in understanding what other properties of the rotation with Lebesgue measure carry over to $(\bs^1, \nu, R_\al)$ when $\nu$ is infinite. In section \ref{unique}, we prove that for badly approximable $\al$, $T_\al$ has only $e^{2\pi i\al}$ as eigenvalues, and thus the same can be said about the associated dynamical system on the circle. Whether this is true of $T_\al$ for $\al$ that are not badly approximable remains open. Using these properties of $T_\al$, we show in Section \ref{discretespectrum} that if $X_\al$ has infinite measure and if $\alpha$ is badly approximable, every probability preserving factor of $T_\al$ is weakly mixing. This implies that every nontrivial probability preserving factor of $X_\al$ does not have discrete spectrum, thereby providing a solution to Problem 5.5 of \cite{GlWe16}. \\\\
Another property that we analyze is rigidity. In section \ref{badlyrigid}, we show that $T_\al$ is rigid if and only if $\al$ is not badly approximable. Hence, for badly approximable $\al$, the system $(\bs^1, \nu, R_\al)$ differs in an important qualitative way to rotation with Lebesgue measure, even though both systems share many other properties, including having the same eigenvalues and eigenfunctions.\\\\
Finally, Section \ref{type3} extends our construction to the case of  type $III_\la$ nonsingular transformations.

\section{The construction}\label{construction}
Given an irrational $\alpha$ that is not of golden type, we construct a rank-one transformation $T_\alpha$ with $e^{2\pi in\al}$ as an eigenvalue using the cutting and stacking procedure. $T_\alpha$ will be defined on subsets of $\br$ with Lebesgue measure $m$. As mentioned in the introduction this is based on and extends del Junco's construction \cite{dJ76a}.  For the remainder of the paper we assume $\alpha $ is an irrational number   that  is not of golden type.

Recall that $(a_k)$ and $(\frac{p_k}{q_k})$ are the coefficients and convergents of the continued fraction expansion of $\alpha$. We have  by  formulas \ref{b} that $q_{k-1}= q_{k+1}-q_ka_k$, and $q_1 =1$. We start with column $C_1\subset \br$ of height $q_1 = 1$. From here on out, until specified otherwise, all of our intervals will be half open intervals: closed on the right and open on the left. For our purposes, it does not matter exactly what interval is in $C_1$ is, however, for convenience we take it to be the unit interval. Also, for notational convenience we let $C_k$ denote both the column and the union of the levels in the column, and it should be clear from the context which definition is meant. 
Suppose that the column $C_k$ with height $q_k$ has been constructed. Then we construct $C_{k + 1}$ to have height $q_{k+1}$ as follows: We cut $C_k$ into $a_k$ sub-columns of equal width and stack these above each other into a single column. We then add $q_{k+1}-q_ka_k = q_{k-1}$ additional levels on top so that the final height is $q_{k + 1}$ (see Figure \ref{fig}). These  additional levels, also called ``spacers",  are taken from $\br\setminus C_k$.  
Let $X$ be the union of all the columns. Notice that, setting $\mu_k:=m(C_k)$, we have
$$\mu_{k + 1} = \frac{q_{k-1}}{a_kq_k}\mu_k + \mu_k,$$
and hence 
\begin{equation}\label{measureX}
m(X) = \mu_1\prod_{k=2}^\infty \left(\frac{q_{k-1}}{a_kq_k} + 1\right). 
\end{equation}
Depending on the $(q_k)$, we may end up with a finite or an infinite measure space $X$. A necessary and sufficient condition for \ref{measureX} to converge is that $\sum_{k=1}^\infty \frac{q_{k-1}}{a_k q_k}$ converges. Let $T_\alpha: X \to X $ be the rank-one transformation defined by the above cutting and stacking procedure. We now see why it is necessary that $\alpha$ not be of golden type. (In that case, $a_k = 1$ for $k$ large enough, and we would no longer be cutting our columns. $T_\alpha$ would not be rank-one; in fact, it is not even surjective on $X$.)\\

\begin{figure}[h]
\begin{tikzpicture}
\draw [ very thick] (1,0) -- (1,5);
\draw [ very thick] (2,0) -- (2,5);
\draw [ very thick] (3,0) -- (3,5);
\draw [ very thick] (5,0) -- (5,5);

\draw [blue, very thick] (0,0) -- (6,0);
\draw [blue, very thick] (0,1) -- (6,1);
\draw [blue, very thick] (0,2) -- (6,2);
\draw [blue, very thick] (0,3) -- (6,3);
\draw [blue, very thick] (0,5) -- (6,5);

\draw [red, very thick] (5,5.5) -- (6,5.5);
\draw [red, very thick] (5,6) -- (6,6);
\draw [red, very thick] (5,6.5) -- (6,6.5);
\draw [red, very thick] (5,7.5) -- (6,7.5);

\draw [decorate,decoration={brace,amplitude=7pt, mirror},xshift=0pt,yshift=4pt]
(0,-0.5) -- (6,-0.5) node [black,midway,yshift=-0.6cm] 
{\footnotesize $a_{k}$};

\draw [decorate,decoration={brace,amplitude=7pt},xshift=0pt,yshift=4pt]
(-0.5,-0.2) -- (-0.5,5) node [black,midway,xshift=-0.7cm] 
{\footnotesize $q_{k}$};

\draw [decorate,decoration={brace,amplitude=7pt, mirror},xshift=0pt,yshift=4pt]
(6,5) -- (6,7.5) node [black,midway,xshift=0.8cm] 
{\footnotesize $q_{k-1}$};

\node (A) at (4, 4) [] {$\iddots$};
\node (A) at (5.5, 7.1) [] {$\vdots$};
\node (A) at (4, 0.5) [] {$\dots$};
\node (A) at (4, 1.5) [] {$\dots$};
\node (A) at (4, 2.5) [] {$\dots$};

\end{tikzpicture}
\caption{}\label{fig}
\end{figure}

We will now construct an eigenfunction $f\in L^\infty(X)$ for $T_\alpha$ with $\la = e^{2\pi i \al}$ as an eigenvalue. 
First define functions $g_k:X\to \mathbb{C}$ by setting $g_k = \lambda^n$ on the $n$th level of $C_k$ and $g_k = 0$ on $X \setminus C_k$. Now define $f_k$ by setting $f_k = g_k$ on $C_k$, and $f_k = g_{j}$ on $C_{j}\setminus C_{j-1}$ for all $j\geq k+1$. Then the $f_k$ are defined on all of $X$ and are clearly in $L^\infty$.  We would now like to show that the $f_k$ converge under the $L^\infty$ norm to an $f\in L^\infty(X)$. To do so, we bound $|f_{k+1}- f_{k}|$. For this, note that on $X\setminus C_k$, $f_k= f_{k+1}$, and so we need to worry only about the difference on $C_k$. $C_{k+1}$ is built by cutting the levels of $C_k$ into $a_k$ columns of equal size, and so we will let $C_k^j(n)$ denote the $n$th level in the $j$th new column created by cutting $C_k$ in this fashion, where $0\leq j \leq a_{k}-1$. Observe that on $C_k^j(n)$, $f_{k+1} = \la^{jq_k+n}$, and $f_k = \la^n$, giving that for each $0\leq j\leq a_{k}-1$,
\begin{align*}|f_{k+1}-f_k| &= |\la^n(\la^{jq_k}-1)|\\
& = |\la^{jq_k}-1|\\
&= |e^{2\pi i(\eps_k \pm \frac{p_k}{q_k})jq_k}-1|\\
& \leq 2\pi\eps_kj q_k \text{ \hspace{1.09in} on $C_k^j(n)$} \\
& < 2\pi\eps_ka_kq_k \text{ \hspace{1in} on $C_k(n)$ since $j<a_k$.}
\end{align*}
This tells us that $\|f_{k+1}-f_k\|_\infty < 2\pi\eps_ka_kq_k \leq 2\pi\eps_k q_{k+1}$ since $a_kq_k = \lfloor q_{k+1}/q_k\rfloor q_k \leq q_{k+1}$. Therefore, by \thref{irrLem},
$$\sum_{k = 0}^\infty ||f_{k + 1} - f_k||_{\infty} < \infty,$$
 making $(f_k)$ a  Cauchy sequence. We let $f = \lim_{k \to \infty} f_k$. Now we confirm that $\la$ is an eigenvalue of $f$. Given $x \in X$, we can find some column $C_K$ so that $x \in C_K$ and the level containing $x$ is not the top level of $C_K$. Then for all $k \geq K$ we have that $f_k(T_\al(x)) = \la f_k(x)$. Then $$f(T_\al(x)) = \lim_{k \to \infty}f_k(T_\al(x)) = \la \lim_{k \to \infty}f_k(x) = \la f(x).$$
 Thus, we have that $f$ is indeed an eigenfunction with $\al$ as an eigenvalue for the transformation $T_\al$.
 
 \begin{rem}
If  $X$ has finite measure, then $L^\infty(X)\subset L^2(X)$ and $(f_k)$ converges in the $L^2$ norm as well; hence the above procedure constructs an eigenfunction in the appropriate function space.  
 \end{rem}
 \begin{rem}
In the case when $X$ has finite measure our construction is in fact a special case of del Junco's construction in \cite{dJ76a}.
 However, in our case, by \thref{relprimecols}, $T_\alpha$ is always totally ergodic, but in general del Junco's construction need not be. Therefore  there exist del Junco constructions as in \cite{dJ76a} which cannot be isomorphic to an irrational rotation.

 \end{rem}

\section{Measure of $X_\al$}\label{finiteOrNot}

We now classify irrational numbers based on whether $X_\alpha$ as constructed in Section \ref{construction} will have finite or infinite measure. This is the same as asking whether the total measure of the spacers added is finite or infinite. 

\begin{thm}\thlabel{finitude}
The transformation $T_\alpha$ is an infinite measure-preserving transformation if and only if $\sum_{k=2}^\infty \frac{1}{a_ka_{k-1}} = \infty$.
\end{thm}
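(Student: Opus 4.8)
The plan is to analyze the convergence of the infinite product in \eqref{measureX}, which converges if and only if the series $\sum_{k=2}^\infty \frac{q_{k-1}}{a_k q_k}$ converges (since all terms are positive and less than $1$ for $k$ large, the standard criterion for convergence of $\prod(1+x_k)$ applies). So the task reduces to showing
\[
\sum_{k=2}^\infty \frac{q_{k-1}}{a_k q_k} = \infty \iff \sum_{k=2}^\infty \frac{1}{a_k a_{k-1}} = \infty.
\]
The natural approach is to find two-sided bounds comparing $\frac{q_{k-1}}{q_k}$ with $\frac{1}{a_{k-1}}$, i.e., to show these are comparable up to universal constants, or at least comparable enough that the two series converge together.

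First I would recall the recursion $q_k = a_{k-1} q_{k-1} + q_{k-2}$ from \eqref{b}. From this, $\frac{q_k}{q_{k-1}} = a_{k-1} + \frac{q_{k-2}}{q_{k-1}}$, and since $0 < \frac{q_{k-2}}{q_{k-1}} < 1$ (as $q_k$ is strictly increasing for $k\ge 2$), we get the clean bound
\[
a_{k-1} < \frac{q_k}{q_{k-1}} < a_{k-1} + 1 \le 2 a_{k-1}.
\]
Hence $\frac{1}{2a_{k-1}} < \frac{q_{k-1}}{q_k} < \frac{1}{a_{k-1}}$. Dividing by $a_k$:
\[
\frac{1}{2 a_k a_{k-1}} < \frac{q_{k-1}}{a_k q_k} < \frac{1}{a_k a_{k-1}}.
\]
This immediately gives the equivalence of the two series: each is squeezed between a constant multiple of the other, so one diverges if and only if the other does. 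Then $X_\alpha$ has infinite measure $\iff$ the product in \eqref{measureX} diverges $\iff \sum \frac{q_{k-1}}{a_k q_k} = \infty \iff \sum \frac{1}{a_k a_{k-1}} = \infty$.

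I do not anticipate a genuine obstacle here; the only mild care needed is the bookkeeping at the start of the recursion (the bound $q_{k-2}/q_{k-1} < 1$ requires $q_{k-1} > q_{k-2}$, which holds for $k \ge 3$ since $q_1 = 1, q_2 = a_1 \ge 1$, and is strict once $k \ge 3$; the finitely many initial terms do not affect convergence), and making explicit the elementary fact that for nonnegative $x_k$ with $\sum x_k$ possibly divergent, $\prod_{k}(1+x_k) < \infty \iff \sum_k x_k < \infty$. The ``main step,'' such as it is, is simply the observation that the partial quotient $a_{k-1}$ controls the ratio $q_k/q_{k-1}$ up to an additive (hence, here, multiplicative) constant — a completely standard feature of continued fractions — after which everything is a comparison test.
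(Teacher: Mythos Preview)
Your proposal is correct and takes essentially the same approach as the paper: both reduce the question to convergence of $\sum \frac{q_{k-1}}{a_k q_k}$ via \eqref{measureX}, then use the recursion \eqref{b} to obtain the two-sided bound $a_{k-1}q_{k-1} < q_k < 2a_{k-1}q_{k-1}$ (equivalently your $a_{k-1} < q_k/q_{k-1} < 2a_{k-1}$), and finish by the comparison test. Your version is in fact slightly more careful about the initial indices than the paper's.
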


\begin{proof} 
We know by equation \ref{measureX} that $T_\al$ is an infinite measure transformation if and only if $ \sum_{k=1}^\infty \frac{q_{k-1}}{a_kq_k} = \infty$. By formulas \ref{b}, 
$$a_{k-1}q_{k-1} < q_k = a_{k-1}q_{k-1} + q_{k-2} < 2a_{k-1}q_{k-1}$$
Therefore, 
\begin{align*}
    \sum_{k=2}^\infty \frac{1}{2a_ka_{k-1}} &= \sum _{k=2}^\infty \frac{q_{k-1}}{a_k(2a_{k-1}q_{k-1})} \\
    &\leq \sum_{k=2}^\infty \frac{q_{k-1}}{a_{k-1}q_k} \\
    &\leq \sum_{k=2}^\infty \frac{q_{k-1}}{a_k(a_{k-1}q_{k-1})} \\
    &= \sum _{k=2}^\infty \frac{1}{a_ka_{k-1}}.
\end{align*}
Then $\sum_{k=2}^\infty \frac{q_{k-1}}{a_{k-1}q_k}$ diverges if and only if $\sum_{k=2}^\infty \frac{1}{a_ka_{k-1}}$ diverges, and so we are done.
\end{proof}

Since badly approximable numbers are precisely the irrational numbers with bounded coefficients $a_k$, it is clear that for a badly approximable $\alpha$, $\sum_{k=2}^\infty \frac{1}{a_ka_{k-1}}$ diverges, and $T_\alpha$ will be an infinite measure-preserving transformation. On the other hand, an example of irrationals such that the sum converges are any numbers whose continued fraction coefficients $(a_k)$ are strictly increasing. A specific example is the number $\alpha$ with continued fraction coefficients $[0; 1, 2, 3, 4...] = \frac{I_1(2)}{I_0(2)}$, where $I_1, I_0$ are the modified Bessel functions of the first kind. In that case, $T_\alpha$ will act on a finite measure space.

\begin{cor}
There are uncountably many $\alpha$ for which $X_\alpha$ has finite measure.
\end{cor}
\begin{proof}
Letting $a_k$ equal to $2^k$ or $2^k + 1$, we find that there are uncountably many sequences $(a_k)$ such that
$$\sum_k \frac{1}{a_ka_{k - 1}} < \infty.$$
Each sequence $(a_k)$ defines a unique irrational number, so there are uncountably many irrational numbers $\alpha$ such that $X_\alpha$ has finite measure.
\end{proof}

Now we address the question of the measure of the set of all irrational numbers such that $\sum_{k=2}^\infty \frac{1}{a_ka_{k-1}}$ converges. We use a corollary of the famous theorem due to Gauss and Kuzmin to answer this question. We will first want to compute the probability that for $\alpha\in \mathbb{R}$ the $k$th continued fraction coefficient $a_k$ of $\alpha$ equals $n$. Without loss of generality we will assume for the remainder of this section that $\alpha \in [0, 1)$. This is because the continued fraction expansion of $\alpha$ is identical the continued fraction expansion of $\alpha \pmod{1}$ with the exception of the first coefficient. We can consider the standard Lebesgue measure on $[0, 1)$, then the $k$th continued fraction coefficient $a_k(\alpha)$ is a well defined random variable. 

\begin{thm}[Gauss-Kuzmin] \thlabel{gauss}
Let $P(a_k(\alpha) = n)$ be the probability that the $k$th continued fraction coefficient $a_k$ of $\alpha$ equals $n$. Then $$\lim_{k\to\infty} P(a_k(\alpha) = n) = \log_2\left(1 + \frac{1}{n(n+2)}\right).$$
\end{thm}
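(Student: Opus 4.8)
The plan is to rewrite the event $\{a_k(\alpha)=n\}$ as a preimage under iterates of the Gauss map and then invoke the classical convergence theorem for the associated transfer operator. Recall the Gauss map $G\colon[0,1)\to[0,1)$ given by $G(0)=0$ and $G(x)=1/x-\lfloor 1/x\rfloor$ for $x\neq 0$. Since for $\alpha=[0;a_1,a_2,\dots]$ one has $1/\alpha=a_1+[0;a_2,a_3,\dots]$, an immediate induction gives $G^{k-1}(\alpha)=[0;a_k,a_{k+1},\dots]$, hence $a_k(\alpha)=\lfloor 1/G^{k-1}(\alpha)\rfloor$. Therefore $\{a_k(\alpha)=n\}=\{\alpha:G^{k-1}(\alpha)\in(\tfrac1{n+1},\tfrac1n]\}$, so that, writing $m$ for Lebesgue measure on $[0,1)$ and $F_j(s):=m(\{\alpha:G^j(\alpha)\le s\})$,
\[
P(a_k(\alpha)=n)=m\bigl(G^{-(k-1)}(\tfrac1{n+1},\tfrac1n]\bigr)=F_{k-1}\!\bigl(\tfrac1n\bigr)-F_{k-1}\!\bigl(\tfrac1{n+1}\bigr).
\]

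The substantive step is to show $F_j(s)\to\log_2(1+s)$ as $j\to\infty$, uniformly on $[0,1]$. Here I would introduce the Gauss measure $\mu$ with density $\tfrac{1}{\log 2}\cdot\tfrac{1}{1+x}$; a change of variables over the inverse branches $x\mapsto 1/(x+\ell)$, $\ell\ge 1$, shows $\mu$ is $G$-invariant. Splitting $\{G^{j+1}(\alpha)\le s\}$ according to the value $\ell=\lfloor 1/G^j(\alpha)\rfloor$ yields the recursion $F_{j+1}(s)=\sum_{\ell\ge1}\bigl(F_j(1/\ell)-F_j(1/(\ell+s))\bigr)$, and differentiating shows the densities $h_j=F_j'$ satisfy $h_{j+1}=\mathcal Lh_j$, where $(\mathcal Lh)(s)=\sum_{\ell\ge1}\tfrac{1}{(\ell+s)^2}h\bigl(\tfrac1{\ell+s}\bigr)$ is the Gauss--Kuzmin--Wirsing operator; one checks directly that $h_*(s)=\tfrac1{(\log 2)(1+s)}$ is a fixed point. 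The key fact — which I would cite rather than reprove — is that $\mathcal L^j h\to (\int_0^1 h)\,h_*$ for densities $h$ of bounded variation, with exponential rate (Kuzmin; sharpened by L\'evy and Wirsing). Equivalently, $(G,\mu)$ is exact, hence mixing, and since $m\ll\mu$ with bounded Radon--Nikodym derivative this forces $m(G^{-j}A)\to\mu(A)$ for every interval $A\subset[0,1)$; applied to $A=(0,s]$ this is exactly $F_j(s)\to\mu((0,s])=\log_2(1+s)$.

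Combining the two steps, letting $k\to\infty$ in the displayed identity gives
\[
P(a_k(\alpha)=n)\longrightarrow\log_2\!\Bigl(1+\tfrac1n\Bigr)-\log_2\!\Bigl(1+\tfrac1{n+1}\Bigr)=\log_2\frac{(n+1)^2}{n(n+2)}=\log_2\!\Bigl(1+\frac1{n(n+2)}\Bigr),
\]
where the last two equalities are elementary algebra. The only genuine obstacle is the convergence $\mathcal L^j h\to h_*$: everything else is bookkeeping about continued fractions and the Gauss map. Since this is the classical Gauss--Kuzmin theorem, in the paper I would simply cite it; a self-contained proof would require either Kuzmin's explicit inequalities for the iterated densities or a spectral-gap argument for $\mathcal L$ on a suitable function space (e.g.\ functions of bounded variation, or functions holomorphic on a half-plane), which is the one non-routine part.
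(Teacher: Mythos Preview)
Your outline is correct and is exactly the standard route to Gauss--Kuzmin via the Gauss map and the Gauss--Kuzmin--Wirsing transfer operator. However, you should be aware that the paper does not prove this theorem at all: it is quoted as a classical result, with the relevant corollary attributed to \cite[Theorem~10.4.2]{MT06}, and then applied. So there is no ``paper's own proof'' to compare against; the authors simply invoke the theorem as known.

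That said, your sketch is sound. The identification $\{a_k=n\}=G^{-(k-1)}\bigl((\tfrac{1}{n+1},\tfrac{1}{n}]\bigr)$ is correct, the recursion for $F_j$ and the operator $\mathcal L$ are set up properly, and you correctly isolate the one non-elementary ingredient (spectral gap for $\mathcal L$, equivalently exactness of the Gauss map) as the point that would require a citation or a separate argument. The final algebraic identity $\log_2(1+\tfrac{1}{n})-\log_2(1+\tfrac{1}{n+1})=\log_2\bigl(1+\tfrac{1}{n(n+2)}\bigr)$ is right. In short: nothing to fix, but for the purposes of this paper a proof is not needed---the result is classical and is being used as a black box.
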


The following corollary of the proof of the Gauss-Kuzmin theorem (see \cite[Theorem 10.4.2]{MT06}) shows that the probability that a coefficient takes on a certain value is dependent on the value of the previous coefficient. 
\begin{cor}\thlabel{gausscor}
$$\lim_{k \to \infty} P(a_{k}(\alpha) = n_2 | a_{k-1}(\alpha) = n_1)$$ 
\begin{equation} \thlabel{prob}
    = \frac{\log\left(1+ \frac{1}{[(n_2+1)n_1+1][(n_1+1)n_2+1]}\right)}
    {\log\left(1+ \frac{1}{n_1(n_1+2)}\right)}
\end{equation}

\end{cor}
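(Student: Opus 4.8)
The plan is to derive the conditional distribution formula directly from the transfer operator analysis that proves the Gauss--Kuzmin theorem, rather than from the limiting marginal density alone. Recall that the Gauss map $G(x) = \{1/x\}$ on $[0,1)$ has invariant density $\frac{1}{\log 2}\cdot\frac{1}{1+x}$, and that $a_k(\alpha) = n$ precisely when $G^{k-1}(\alpha) \in \bigl(\frac{1}{n+1}, \frac{1}{n}\bigr]$. The joint event $\{a_{k-1}(\alpha) = n_1,\ a_k(\alpha) = n_2\}$ is the event $G^{k-2}(\alpha) \in I_{n_1}$ and $G^{k-1}(\alpha) \in I_{n_2}$, where $I_n = (\frac{1}{n+1},\frac 1n]$; equivalently, $G^{k-2}(\alpha)$ lies in the sub-interval $J_{n_1,n_2} \subset I_{n_1}$ consisting of those $x$ with $G(x)\in I_{n_2}$.

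First I would record that, by the standard mixing/convergence result underlying Theorem~\ref{gauss} (this is exactly \cite[Theorem 10.4.2]{MT06}), the distribution of $G^{k-2}(\alpha)$ converges in total variation to the Gauss measure $d\gamma = \frac{1}{\log 2}\frac{dx}{1+x}$. Hence
\[
\lim_{k\to\infty} P\bigl(a_k = n_2 \mid a_{k-1}=n_1\bigr)
= \frac{\gamma(J_{n_1,n_2})}{\gamma(I_{n_1})}.
\]
Second, I would compute the two $\gamma$-measures explicitly. For the denominator, $\gamma(I_{n_1}) = \frac{1}{\log 2}\int_{1/(n_1+1)}^{1/n_1}\frac{dx}{1+x} = \frac{1}{\log 2}\log\!\frac{1 + 1/n_1}{1 + 1/(n_1+1)} = \frac{1}{\log 2}\log\!\bigl(1 + \frac{1}{n_1(n_1+2)}\bigr)$, which is the denominator of \eqref{prob} and is consistent with Theorem~\ref{gauss}. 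For the numerator, I need the endpoints of $J_{n_1,n_2}$: writing $x = \frac{1}{n_1 + t}$ with $t\in[0,1)$ ranging over $I_{n_1}$, the condition $G(x) = \frac{1}{x} - n_1 = t \in I_{n_2}$ means $t \in (\frac{1}{n_2+1}, \frac{1}{n_2}]$, so $x$ ranges over the interval with endpoints $\frac{1}{n_1 + 1/n_2} = \frac{n_2}{n_1 n_2 + 1}$ and $\frac{1}{n_1 + 1/(n_2+1)} = \frac{n_2+1}{n_1(n_2+1) + 1}$. Then $\gamma(J_{n_1,n_2}) = \frac{1}{\log 2}\log\frac{1 + (n_2+1)/(n_1(n_2+1)+1)}{1 + n_2/(n_1n_2+1)}$, and after simplifying the ratio of the two linear-fractional terms — numerator $\frac{(n_1+1)(n_2+1)+1}{n_1(n_2+1)+1}$ over denominator $\frac{(n_1+1)n_2+1}{n_1n_2+1}$ — one obtains $\frac{1}{\log 2}\log\!\bigl(1 + \frac{1}{[(n_2+1)n_1+1][(n_1+1)n_2+1]}\bigr)$, matching the numerator of \eqref{prob}. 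Dividing, the $\frac{1}{\log 2}$ factors cancel and \eqref{prob} follows.

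The main obstacle is not conceptual but the bookkeeping in the last algebraic simplification: one must carefully expand the cross-multiplied quadratics $[(n_2+1)n_1+1][(n_1+1)n_2+1]$ and verify the telescoping that turns the quotient of two $\log(1 + \cdots)$-type expressions into a single clean $\log(1 + 1/(\text{product}))$. It is worth double-checking the computation by a sanity test: summing \eqref{prob} over $n_2\ge 1$ should give $1$, which amounts to the telescoping identity $\gamma\bigl(\bigsqcup_{n_2} J_{n_1,n_2}\bigr) = \gamma(I_{n_1})$, since $G$ maps $I_{n_1}$ onto $[0,1)$ bijectively (up to endpoints) and $\bigsqcup_{n_2} I_{n_2} = (0,1]$. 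A second sanity check is symmetry: the numerator of \eqref{prob} is symmetric in $(n_1,n_2)$, which reflects the reversibility of the Gauss measure under the natural extension and is a good guard against algebra errors. I would cite \cite[Theorem 10.4.2]{MT06} for the total-variation convergence of $G^{k-2}(\alpha)$ to $\gamma$ and present the rest as the explicit integral computation above.
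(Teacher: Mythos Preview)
Your proposal is correct and is precisely the argument the paper has in mind: the paper does not write out a proof but merely cites \cite[Theorem 10.4.2]{MT06}, which is exactly the convergence of the distribution of $G^{k}(\alpha)$ to the Gauss measure $\gamma$ that you invoke. Your explicit computation of $\gamma(I_{n_1})$ and $\gamma(J_{n_1,n_2})$, together with the algebraic identity $CD = AB + 1$ (in your notation $A = n_1(n_2+1)+1$, $B = (n_1+1)n_2+1$, $C = (n_1+1)(n_2+1)+1$, $D = n_1 n_2 + 1$), cleanly yields \eqref{prob}, so nothing further is needed.
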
   

By \thref{gauss}, we see that $P(a_k = 1) \to \log_2(\frac{4}{3}) \approx 0.4150$. 
We can similarly evaluate \thref{prob} to see that $P(a_k = 1|a_{k-1} = 1) \to 0.3662$. Therefore the probability that a pair of consecutive coefficients $a_k, a_{k+1}$ both equal  $1$ approaches $0.4150 \cdot 0.3662$, as $k$ gets large. Since this is a nonzero probability, such pairs will occur infinitely often in the continued fraction expansion of almost all $\alpha$, thus implying that $\sum_{k=0}^\infty \frac{1}{a_ka_{k-1}}$ diverges. Therefore, the answer to our question is that $T_\alpha$ has infinite measure for almost all $\alpha$.

\section{$T_\al$ is totally ergodic}\label{toteserg}

We show that $T_\al$ is totally ergodic, i.e., it has no rational eigenvalues. 

\begin{lemma}\thlabel{eigenvals}
Let $T = T_\al$ be defined as in Section \ref{construction} for an irrational $\alpha$ and let $(\frac{p_k}{q_k})$ be the continued fraction convergents for $\alpha$. If $\lambda$ is an eigenvalue of $T$, then
$$\lim_{j \to \infty} \lambda^{q_j} = 1.$$
\end{lemma}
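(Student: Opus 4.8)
The plan is to use the rank-one structure together with the explicit cutting-and-stacking description of $T_\al$. The key point is that the column $C_k$ has height $q_k$, and when we build $C_{k+1}$ from $C_k$ we stack $a_k$ copies of $C_k$ on top of each other (plus $q_{k-1}$ spacers). Consequently, if $x$ lies in level $n$ of $C_k$ with $n + q_k < $ (height of $C_{k+1}$ below the spacers), then $T^{q_k}x$ lies directly above $x$, in the same relative position within the next copy of $C_k$ inside $C_{k+1}$. So $T^{q_k}$ moves points ``up by one copy'' on most of the column, and in particular $T^{q_k}x$ and $x$ lie in the same level of $C_k$.

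First I would make this precise: given $\e > 0$, pick a level $L = C_k(i)$ of a column $C_k$ with $0 \le i < q_{k-1}$ say (so that there is room for $a_k - 1 \ge 1$ further copies above it before the spacers), and observe that $T^{q_k}$ maps the portion of $L$ lying in the $j$th sub-column ($0 \le j \le a_k - 2$) into $L$ again. Hence $\mu(T^{q_k}L \cap L) \ge \frac{a_k - 1}{a_k}\mu(L) \ge \frac12 \mu(L)$, and more generally, for any set $A$ that is well-approximated by a union of levels of $C_k$, we get $\mu(T^{q_k}A \,\triangle\, A) $ small — concretely $\liminf_{k} \mu(T^{q_k}A \cap A) \ge \frac12\mu(A)$ for every measurable $A$ of positive finite measure, using rank-one approximation to reduce to unions of levels. (In the infinite-measure case one applies this to sets $A$ of positive finite measure, which suffices.)

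Next I would feed this into the eigenvalue equation. If $f$ is the eigenfunction with $f\circ T = \la f$, then $f \circ T^{q_j} = \la^{q_j} f$. Since $|f|$ is constant a.e. (ergodicity) and nonzero, normalize $|f| = 1$. Choose $A$ with $0 < \mu(A) < \infty$ on which $f$ is nearly constant, equal to some value $c$ with $|c|=1$, up to error $\e$ in $L^1(A)$ or in measure; this is possible because $f$ is measurable and $|f|=1$. On the overlap $T^{-q_j}A \cap A$, which has measure at least (roughly) $\frac12\mu(A)$ for infinitely many $j$, we have $f(T^{q_j}x) \approx c$ and $f(x) \approx c$, so $\la^{q_j} = f(T^{q_j}x)/f(x) \approx 1$ on a set of positive measure; since $\la^{q_j}$ is a constant, this forces $|\la^{q_j} - 1|$ to be small. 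Letting $\e \to 0$ along the subsequence, and then noting that $\la^{q_j}$ is a sequence on the unit circle for which every convergent subsequence converges to $1$, we conclude $\la^{q_j} \to 1$.

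The main obstacle I anticipate is the bookkeeping in the infinite-measure setting: ``$f$ nearly constant on a set of positive finite measure'' must be extracted carefully (e.g. by partitioning the circle of values $|f|=1$ into small arcs and choosing one preimage piece of positive measure, intersected with a level of $C_k$), and the rank-one approximation only controls symmetric differences with unions of levels, so I must check that $A$ can simultaneously be taken to be (approximately) a union of levels of some $C_k$ \emph{and} a set on which $f$ is nearly constant — which works because one first fixes $A$ making $f$ nearly constant, then approximates $A$ by levels of $C_k$ for $k$ large, and only then applies the $T^{q_k}$ overlap estimate for that same $k$. Care is also needed that $i < q_{k-1}$ leaves enough of $A$ in the ``safe'' (non-spacer) part of the column; since the spacer levels are a vanishing fraction only when $q_{k-1}/q_k \to 0$, which holds, but more robustly one just notes at least one of the $a_k \ge 2$ sub-copies maps level-to-level, giving the uniform factor $\tfrac12$.
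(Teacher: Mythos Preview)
Your approach is correct and essentially identical to the paper's: both pick a set $A$ on which the (unit-modulus) eigenfunction is within $\eps/2$ of a constant $c$, use rank-one approximation to find a level of $C_k$ that is mostly covered by $A$, observe that all but the rightmost of the $a_k$ sub-pieces of that level return to it under $T^{q_k}$, and conclude $|\la^{q_k}-1|<\eps$ from $T^{q_k}A\cap A\neq\emptyset$. The paper's write-up is marginally more direct---it works with a single level that is more than $\tfrac34$-full of $A$ rather than first proving a general partial-rigidity inequality---and one small correction to your aside: $q_{k-1}/q_k\to 0$ fails for badly approximable $\alpha$, but as you immediately note, the uniform $\tfrac12$ bound coming from $a_k\ge 2$ is what is actually used.
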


\begin{proof}
The proof we give works in a more general setting, i.e., suppose there exists $N_0$ so that for all $n>N_0,$ for every level $I$ of $C_n$ at least $1/2$ (say) comes back under $h_n$: $\mu(T^{h_n}I\cap I)>1/2\mu(I)$. The argument we give shows that $\lim \la^{h_n}=1$. Let now  $f$ be an eigenfunction of $T$. Then $f \circ T = \la f$ for some $\la\in\mathbb C$ with $|\la|=1$. For each $\eps > 0$, there is some $c$ such that the set 
$$A = \left\{ x : |f(x) - c| < \frac{\eps}{2} \right\}$$
has positive measure. Then there is a level, that we may denote by  $I_{i}$, on the $i$th column for which $\mu (A\cap I_i)> \frac{3}{4} \mu(I_i)$. Note that by the construction of $T$, for all sufficiently large $i$, at least $1/2$  of $I_i$ returns to itself under $T^{q_{i}}$: $\mu(T^{q_i}I_i\cap I_i)>1/2\mu(I_i)$. 
 So there exists $x\in A$ such that  $T^{q_i}(x) \in A$. Thus   $\frac{\eps}{2}> |f \circ T^{q_i}(x) - c| = |\lambda^{q_i}f(x) - c| $. By the triangle inequality, we have 
$$|\lambda^{q_i} - 1| < \eps.$$
Note that for that same $\eps$, since $I_i$ is partitioned into several subinterval of equal measure, there is at least one  subinterval that is more than $3/4$ full of $A$; this will be a level in $C_{i+1}$. Thus, we may conclude that for each $j \ge i$, we have
$$|\lambda^{q_j} - 1| < \eps$$
This shows that 
$$\lim_{j \to \infty} \lambda^{q_j} = 1.$$
\end{proof}

The proof of the following lemma follows ideas in Turek \cite{Tu78}. Other conditions for total ergodicity for rank-one transformations are in \cite{CrSi04, GaHi13}.

\begin{lemma}\thlabel{relprimecols}
Let $T$ be a rank-one transformation defined on a finite or infinite measure space. Suppose for every eigenvalue $\lambda$,  
$\lim_{j \to \infty} \lambda^{h_j} = 1$. If there is a subsequence $(h_{n_j})$ of the column heights $(h_n)$ that are relatively prime, then T is totally ergodic.
\end{lemma}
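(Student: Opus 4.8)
The plan is to show that $T$ is totally ergodic by showing it has no nontrivial rational eigenvalues, using the given hypotheses. So suppose $\lambda$ is an eigenvalue of $T$ with $\lambda^n = 1$ for some $n \neq 0$; I want to conclude $\lambda = 1$. The idea is that $\lambda$ must be a root of unity of order $d$, where $d \mid n$, and I want to force $d = 1$.

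First I would use the hypothesis $\lim_{j\to\infty}\lambda^{h_j} = 1$ together with the fact that $\lambda$ is a root of unity. If $\lambda$ has order $d$, then $\lambda^{h_j} = 1$ precisely when $d \mid h_j$; since $\lambda^{h_j} \to 1$ and the set of $d$-th roots of unity is finite and discrete, the convergence $\lambda^{h_j}\to 1$ actually forces $\lambda^{h_j} = 1$ for all sufficiently large $j$. Hence $d \mid h_j$ for all large $j$. In particular, $d$ divides every sufficiently large column height.

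Now I bring in the relatively prime subsequence: there is a subsequence $(h_{n_j})$ of column heights that is relatively prime, meaning $\gcd$ of all of them (or of some finite subcollection) is $1$. Discarding finitely many terms, I may assume all $h_{n_j}$ in this subsequence are large enough that $d \mid h_{n_j}$. But then $d$ divides $\gcd_j h_{n_j} = 1$, so $d = 1$, i.e., $\lambda = 1$. Therefore $T$ has no rational eigenvalue other than $1$, and since $T$ is rank-one it is ergodic, so by the Proposition characterizing total ergodicity via rational eigenvalues, $T$ is totally ergodic.

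The only genuine subtlety — the main point to get right rather than a real obstacle — is the passage from $\lambda^{h_j}\to 1$ to $\lambda^{h_j} = 1$ eventually; this is immediate once one observes $\lambda$ is a root of unity (being a rational eigenvalue), so its powers lie in a finite set and convergence to $1$ is eventual equality. One should also be slightly careful about what "relatively prime" means for the subsequence: if it means pairwise coprime, then any two large terms $h_{n_i}, h_{n_j}$ are coprime and each divisible by $d$, forcing $d = 1$; if it means the whole collection has gcd $1$, the argument above via a finite subcollection works. Either reading closes the argument.
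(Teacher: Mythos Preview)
Your proof is correct and takes essentially the same approach as the paper's (attributed to Turek): both show that a rational eigenvalue of order $d>1$ would force $d\mid h_n$ for all sufficiently large $n$ (the paper does this by writing $h_n=b_nd+r_n$ and arguing $r_n=0$ eventually; you do it more directly via discreteness of the $d$th roots of unity), and then obtain a contradiction with the coprimality of the subsequence. Your explicit discussion of the two possible readings of ``relatively prime'' is, if anything, slightly more careful than the paper's treatment.
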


\begin{proof}[Proof (Turek)]
We prove the contrapositive. Suppose $T$ is not totally ergodic. Then $T$ has a rational eigenvalue, call it $\lambda$. Let $k$ be the smallest positive integer such that
$\lambda^k = 1$. Then $h_n = b_nk + r_n$ for some $b_n$  and $0 \leq r_n < k$.
Since $$|1-\lambda^{h_n}| = |1-\lambda^{b_nk+r_n}| = |1-\lambda^{r_n}|$$
and $\lim_{j \to \infty} \lambda^{h_j} = 1$, it follows that $\lim_{j \to \infty} \lambda^{r_j} = 1$.
Suppose that there existed a subsequence $(r_{n_j})$ such that $ r_{n_j}  \neq 0$. Then $|1 - \lambda^{r_{n_j}}| \geq |e^{2\pi i/k} - 1| > 0$, which would mean that $\lim_{j \to \infty} \lambda^{r_j} \neq 1$. Therefore no such subsequence can exist, so for all $n \geq N$ for some $N$, $r_n = 0$. Then $h_n = b_nk$ for all $n \geq N$ and $(h_N, h_{N+1}, ...) \geq k > 0$.
\end{proof}

\begin{thm}\thlabel{toterg}
The transformation $T_\al$ is totally ergodic.
\end{thm}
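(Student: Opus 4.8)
The plan is to combine \thref{eigenvals} and \thref{relprimecols}. By \thref{eigenvals}, any eigenvalue $\lambda$ of $T_\al$ satisfies $\lim_{j\to\infty}\lambda^{q_j}=1$, where the $q_j$ are the continued fraction denominators of $\al$; these are exactly the column heights $h_j$ of the construction in Section \ref{construction}. So by \thref{relprimecols}, it suffices to exhibit a subsequence of the $(q_j)$ that is pairwise relatively prime — in fact, since \thref{relprimecols} only needs the subsequence to be relatively prime (as a set), and consecutive convergent denominators are already coprime, the obstruction is mild.

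First I would recall the standard identity $q_{k+1}p_k - q_k p_{k+1} = \pm 1$ (equivalently $p_{k+1}q_k - p_k q_{k+1} = (-1)^{k}$), which was already used implicitly in the proof of \thref{irrLem}. This immediately gives $\gcd(q_k, q_{k+1}) = 1$ for every $k$: any common divisor of $q_k$ and $q_{k+1}$ divides $q_{k+1}p_k - q_k p_{k+1} = \pm 1$. Hence the pair $(q_k, q_{k+1})$ is relatively prime for each $k$. This already furnishes infinitely many relatively prime pairs among the column heights, which is enough to invoke \thref{relprimecols}: taking $(h_{n_j}) = (q_1, q_2)$ — or more carefully, noting that $\gcd(q_k,q_{k+1})=1$ means the two-element subsequence $q_k, q_{k+1}$ is relatively prime — we conclude $T_\al$ is totally ergodic. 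To be safe one should check \thref{relprimecols} is being applied with the correct reading of "relatively prime subsequence"; the proof of that lemma shows that if $T$ is not totally ergodic then $k \mid h_n$ for all large $n$ with $k>1$, which is contradicted as soon as two of the $h_n$ are coprime. So a single coprime pair $(q_N, q_{N+1})$ with $N$ large suffices.

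Alternatively, and perhaps more cleanly, one can avoid \thref{relprimecols} entirely and argue directly: if $\lambda = e^{2\pi i p/q}$ is a rational eigenvalue with $q>1$ and $\gcd(p,q)=1$, then $\lim_j \lambda^{q_j}=1$ forces $q \mid q_j$ for all large $j$ (since $\lambda^{q_j}$ is a $q$-th root of unity approaching $1$, it must eventually equal $1$). But then $q$ divides both $q_j$ and $q_{j+1}$ for large $j$, contradicting $\gcd(q_j,q_{j+1})=1$. Hence the only rational eigenvalue is $1$, and by the Proposition characterizing total ergodicity (an ergodic transformation is totally ergodic iff its only rational eigenvalue is $1$) — together with the fact that rank-one transformations are ergodic — $T_\al$ is totally ergodic.

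The main obstacle is essentially bookkeeping rather than mathematics: one must be careful that the column heights of the construction really are the $q_k$ (which is immediate from Section \ref{construction}, where $C_k$ has height $q_k$) and that \thref{eigenvals} applies to give $\lim \lambda^{q_j}=1$ for \emph{every} eigenvalue, including rational ones. Given those, the coprimality of consecutive $q_k$ closes the argument in one line. I would write the short direct version as the proof, citing the $q_{k+1}p_k-q_kp_{k+1}=\pm1$ identity and \thref{eigenvals}.
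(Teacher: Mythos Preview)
Your proposal is correct and follows essentially the same approach as the paper: establish that consecutive convergent denominators satisfy $\gcd(q_k,q_{k+1})=1$ and then invoke \thref{relprimecols} (combined with \thref{eigenvals}). The paper derives coprimality from the recursion $q_{k+1}=a_kq_k+q_{k-1}$ rather than from $p_{k+1}q_k-p_kq_{k+1}=(-1)^k$, but this is a cosmetic difference, and your careful reading of what \thref{relprimecols} actually requires (a coprime pair among the heights for arbitrarily large indices) is exactly right.
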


\begin{proof}
Since $q_{k+1} = a_kq_k+ q_{k-1}$, we have $(q_k, q_{k+1}) = 1$ for all $k$. Then, we may apply \thref{relprimecols} to complete the proof.
\end{proof}

\section{Uniqueness of Eigenvalues For Badly Approximable Numbers}\label{unique}

As in \thref{relprimecols}, we work with $[0, 1) \pmod{1}$ instead of $\bs^1$. Let $\al$ satisfy the conditions of \thref{mainthm}, and $T_{\alpha}$ be the transformation constructed in section \ref{construction} for $\alpha$. Suppose $\beta$ is a real number. Denote $p_{k, \beta} = [q_k\beta]$, where $[\cdot]$ is the nearest integer function. Let $\eps_{k, \beta} = |p_{k, \beta} - q_k\beta|$. We say the \textbf{address} of $\beta$ is the infinite string $p_{\beta} = p_{1, \beta}p_{2, \beta} p_{3, \beta} \cdots$. We thus recast \thref{eigenvals}:

\begin{thm} \thlabel{convzero}
If $\lambda$ is an eigenvalue of $T_{\alpha}$, then $\eps_{k, \lambda}$ converges to $0$. 
\end{thm}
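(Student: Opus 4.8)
The plan is to show that \thref{convzero} is just a restatement of \thref{eigenvals} in the new notation, so the work is purely in translating between $\lambda^{q_j}\to 1$ and $\eps_{k,\lambda}\to 0$. Write $\lambda = e^{2\pi i\beta}$ for some real $\beta$ (every eigenvalue has modulus $1$, as noted after \thref{eigendef}), so that $\lambda^{q_k} = e^{2\pi i q_k\beta}$. The quantity $\eps_{k,\lambda} = |p_{k,\lambda} - q_k\beta| = \|q_k\beta\|$ is exactly the distance from $q_k\beta$ to the nearest integer, since $p_{k,\lambda} = [q_k\beta]$ is defined to be that nearest integer.

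First I would record the elementary two-sided estimate relating $|e^{2\pi i\theta}-1|$ to $\|\theta\|$, the distance from $\theta$ to $\bz$: there are absolute constants so that $c\|\theta\| \le |e^{2\pi i\theta}-1| \le 2\pi\|\theta\|$ for all real $\theta$ (one can take $c = 4$, using $|e^{2\pi i\theta}-1| = 2|\sin(\pi\theta)| \ge 4\|\theta\|$ on the relevant range). Applying this with $\theta = q_k\beta$ gives
\[
4\,\eps_{k,\lambda} \;\le\; |\lambda^{q_k}-1| \;\le\; 2\pi\,\eps_{k,\lambda}.
\]
Then I would invoke \thref{eigenvals}: since $\lambda$ is an eigenvalue of $T_\al$, we have $\lambda^{q_j}\to 1$, i.e. $|\lambda^{q_j}-1|\to 0$; combined with the left-hand inequality $4\eps_{j,\lambda}\le|\lambda^{q_j}-1|$, this forces $\eps_{j,\lambda}\to 0$, which is the claim.

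There is essentially no obstacle here — the only things to be a little careful about are: (i) making sure $p_{k,\lambda}$ is genuinely the nearest integer to $q_k\beta$ so that $\eps_{k,\lambda}$ equals $\|q_k\beta\|$ rather than some larger residue (this is immediate from the definition via the nearest-integer function $[\cdot]$), and (ii) noting that the choice of real $\beta$ with $e^{2\pi i\beta}=\lambda$ does not matter, since $\|q_k\beta\|$ depends only on $\beta \bmod 1$, hence only on $\lambda$. So the proof is a two-line consequence of \thref{eigenvals} once the notation is unpacked; the substantive content already lives in that earlier lemma.
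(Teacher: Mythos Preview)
Your proposal is correct and matches the paper's approach exactly: the paper offers no separate proof, simply introducing \thref{convzero} with the phrase ``We thus recast \thref{eigenvals},'' treating it as a translation of $\lambda^{q_j}\to 1$ into the $\eps_{k,\lambda}$ notation. Your write-up just makes that translation explicit via the standard estimate $|e^{2\pi i\theta}-1|\asymp\|\theta\|$, which is all that is needed.
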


\begin{thm} \thlabel{uniqueEigen} 
Suppose $\alpha$ is badly approximable. Then the only eigenvalues for $T_{\alpha}$ are the integer multiples of $\alpha$. 
\end{thm}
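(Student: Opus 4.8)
The plan is to start from Theorem~\ref{convzero}: if $\lambda = e^{2\pi i\beta}$ is an eigenvalue of $T_\alpha$, then $\eps_{k,\beta} = |p_{k,\beta} - q_k\beta| \to 0$, where $p_{k,\beta} = [q_k\beta]$. The goal is to show that the only $\beta$ (mod $1$) for which this happens are the integer multiples of $\alpha$. First I would record the analogous statement for $\alpha$ itself: by Lemma~\ref{irrLem} (or directly, since $q_k\eps_k < 1/q_{k+1}$), we have $\|q_k\alpha\| \to 0$ rapidly, so every integer multiple $n\alpha$ does satisfy $\eps_{k,n\alpha}\to 0$ — these are genuine candidates, and in fact they are eigenvalues by the construction in Section~\ref{construction}. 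So the content is the converse: rapid approximation of $\beta$ by the denominators $q_k$ forces $\beta \equiv n\alpha$.

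The key step is a rigidity/uniqueness argument using the three-distance structure of $\{q_k\alpha\}$ together with badly approximability. Write $\theta_k = q_k\alpha - p_k \to 0$ (signed), with $|\theta_k| \asymp 1/q_{k+1}$, and let $\eta_k = q_k\beta - p_{k,\beta}\to 0$. The recursion $q_{k+1} = a_k q_k + q_{k-1}$ gives $\theta_{k+1} = a_k\theta_k + \theta_{k-1}$, and similarly (once $k$ is large enough that the nearest integers behave consistently) $\eta_{k+1} = a_k\eta_k + \eta_{k-1} + (\text{integer correction})$; because $\eta_k\to 0$ and the $a_k$ are bounded, the integer correction must eventually vanish, so $\eta_k$ satisfies the \emph{same} linear recursion as $\theta_k$ for all large $k$. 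Now I would argue that the two-dimensional space of solutions of this recursion that tend to $0$ is one-dimensional: the companion matrix $\begin{psmallmatrix} a_k & 1 \\ 1 & 0\end{psmallmatrix}$ has one expanding and one contracting direction, and since $\alpha$ is badly approximable the products are uniformly hyperbolic, so the decaying solutions form a line. Hence $\eta_k = c\,\theta_k$ for some constant $c$ and all large $k$. Matching this against $q_k\beta - p_{k,\beta} = c(q_k\alpha - p_k)$, i.e. $q_k(\beta - c\alpha) \in \bz + o(1)$, and then using that $q_k(\beta - c\alpha)$ is genuinely close to an integer for a sequence of $q_k \to \infty$ that are pairwise coprime, I would conclude $\beta - c\alpha \in \bz$ and that $c$ itself must be an integer (e.g. by feeding $c\alpha \equiv \beta \pmod 1$ back into $\eps_{k, c\alpha}\to 0$ and comparing growth rates, or by a direct continued-fraction argument that only integer multiples of $\alpha$ have denominators $q_k$ as simultaneous good approximants).

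Concretely I would organize it as: (i) set up the signed quantities $\theta_k, \eta_k$ and show both obey the recursion $x_{k+1} = a_k x_k + x_{k-1}$ for large $k$; (ii) prove the one-dimensionality of the decaying solution space using uniform hyperbolicity of the transfer matrices (here is where badly approximable — i.e. bounded $a_k$ — is essential, giving a spectral gap; without it the contracting direction can degenerate); (iii) deduce $\eta_k = c\theta_k$, hence $q_k\beta \equiv c q_k\alpha \pmod{1+o(1)}$; (iv) conclude $\beta \equiv c\alpha$ and $c \in \bz$.

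The main obstacle I anticipate is step (ii): making the "decaying solutions are one-dimensional" claim rigorous and quantitative enough to pin down the constant $c$ exactly (not just up to the contracting-direction ambiguity), and handling the transition from the exact recursion for $\theta_k$ to the merely-eventual recursion for $\eta_k$ — one must be careful that the integer corrections in the $\eta$-recursion really do die, which uses both $\eta_k \to 0$ \emph{and} boundedness of $a_k$ in tandem. A secondary subtlety is ruling out non-integer $c$: a priori $c$ could be any real number making $c\alpha$ well-approximated by the $q_k$, so one needs the extra input (coprimality of consecutive $q_k$, or the best-approximation property of convergents) to force $c \in \bz$.
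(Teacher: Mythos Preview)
Your approach is correct and takes a genuinely different route from the paper's. The paper works directly with the rationals $p_{k,\beta}/q_k$: it picks the unique $n$ with $|n| \le q_k/2$ and $np_k \equiv p_{k,\beta} \pmod{q_k}$, and then shows inductively (using $\eps_{k,\beta} < \tfrac{1}{4M}$ together with the spacing of rationals with denominator dividing $q_{k+1}$) that $np_s \equiv p_{s,\beta} \pmod{q_s}$ for every $s \ge k$, whence $\beta = n\alpha \pmod 1$ in the limit. Your recursion argument is more structural. One correction: bounded $a_k$ is needed only in step (i), to force the integer correction in $\eta_{k+1} = a_k\eta_k + \eta_{k-1} + (\text{integer})$ to vanish. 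Step (ii) needs no hyperbolicity at all: $\{(q_k),(\theta_k)\}$ is a basis of the solution space with Wronskian $q_{k+1}\theta_k - q_k\theta_{k+1} = \pm 1$, and $q_k \to \infty$ alone forces any decaying solution to be a scalar multiple of $(\theta_k)$, regardless of the size of the $a_k$. Step (iv) can be finished cleanly without further continued-fraction input: writing $q_k(\beta - c\alpha) = p_{k,\beta} - cp_k$ for two consecutive indices and eliminating $\beta - c\alpha$ via $q_kp_{k+1} - q_{k+1}p_k = \pm 1$ gives $c = \pm(q_{k+1} p_{k,\beta} - q_k p_{k+1,\beta}) \in \bz$; then $q_k(\beta - c\alpha), q_{k+1}(\beta - c\alpha) \in \bz$ with $\gcd(q_k,q_{k+1})=1$ and B\'ezout yield $\beta - c\alpha \in \bz$. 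The paper's argument is more hands-on and number-theoretic; yours isolates the linear-algebra content and makes transparent that the Diophantine hypothesis enters only to tame the integer corrections in the $\eta$-recursion.
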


\begin{proof}
Let $M$ be a strict upper bound on the continued fraction coefficients for $\al$. By \thref{convzero}, there exists a $K$ so that for each $k \ge K$, we have $\eps_k < \frac{1}{4M}$. Choose $\eps = \frac{1}{4M}$. There exists a $K$ such that for each $k \ge K$, we have $\eps_k < \eps < \frac{1}{4M}$. We see that $[\frac{p_{k, \beta}}{q_k} - \frac{\eps}{q_k}, \frac{p_{k, \beta}}{q_k} + \frac{\eps}{q_k}]$ contains $\beta$, and since $\eps_{k + 1} < \eps$, by the triangle inequality, we have 
$$\left|\frac{p_{k, \beta}}{q_k} - \frac{p_{k + 1}, \beta}{q_{k + 1}}\right| \le \left|\frac{p_{k, \beta}}{q_k} - \beta\right| + \left|\beta - \frac{p_{k + 1, \beta}}{q_{k + 1}}\right| \le \frac{\eps_k}{q_k} + \frac{\eps_{k + 1}}{q_{k + 1}} < \frac{2\eps}{q_k}.$$
Thus, $(\frac{p_{k, \beta}}{q_k} - 2\frac{\eps}{q_k}, \frac{p_{k, \beta}}{q_k} + 2\frac{\eps}{q_k})$ contains exactly one rational point whose denominator divides $q_{k + 1}$ (in reduced form). By properties of continued fractions, we have
$$\left|\frac{np_k}{q_k} - \frac{np_{k + 1}}{q_{k + 1}}\right| = \frac{|n|}{q_kq_{k + 1}}$$
Now choose $n$ such that $|n| \le \frac{q_k}{2}$ and $np_k \equiv p_{k, \beta} \pmod{q_k}$. Since each ball of radius $\frac{1}{2q_{k + 1}}$ contains at most one rational point of denominator dividing $q_{k + 1}$, and $\frac{n}{q_kq_{k + 1}} \le \frac{1}{2q_{k + 1}}$, the ball centered on $\frac{p_{k, \beta}}{q_k}$ with radius $\frac{1}{2q_{k + 1}}$ has precisely one rational point of denominator dividing $q_{k + 1}$. If $|n| = \frac{q_k}{2}$, the point $\frac{np_{k + 1}}{q_{k + 1}}$ is on the boundary of the ball $B_{(2q_{k + 1})^{-1}}(\frac{np_k}{q_k})$, and
$$\left|\frac{np_{k + 1}}{q_{k + 1}} \pmod{1} - \frac{p_{k + 1, \beta}}{q_{k + 1}}\right| < \frac{1}{q_{k + 1}}.$$
But this is a contradiction. Thus, $|n| < \frac{q_k}{2}$, so $\frac{np_{k + 1}}{q_{k + 1}} \pmod{1} = \frac{p_{k + 1, \beta}}{q_{k + 1}}$. \\\\
Suppose $r > k$ and for each $\ell$ such that $r > \ell \geq k$, $\frac{p_{\ell, \beta}}{q_{\ell}} = \frac{np_{\ell}}{q_\ell} \pmod{1}$. Then since $\eps_r < \eps$, $(\frac{p_{r - 1, \beta}}{q_{r - 1}} - 2\frac{\eps}{q_{r - 1}}, \frac{p_{r - 1, \beta}}{q_{r - 1}} + 2\frac{\eps}{q_{r - 1}})$ contains exactly one rational point with denominator dividing $q_{r}$. Since $p_{r - 1, \beta} = np_{r - 1} \pmod{q_{r - 1}}$, $\frac{1}{2q_{r}} > \frac{1}{2Mq_{r - 1}}$, and $B_{(2q_r)^{-1}}(\frac{p_{r - 1, \beta}}{q_{r - 1}})$ contains at most one rational point with denominator dividing $q_r$, that ball must contain one rational point. That rational point must be $\frac{np_r}{q_r} = \frac{p_{r, \beta}}{q_r} \pmod{1}$. By induction, for each $s > k$, we have $\frac{np_{s}}{q_s} = \frac{p_{s, \beta}}{q_s} \pmod{1}$. Since $\frac{np_{s}}{q_s} \pmod{1}$ converges to $n\alpha \pmod{1}$ and $\frac{p_{s, \beta}}{q_s}$ converges to $\beta$, we have $n\alpha = \beta \pmod{1}$.  
\end{proof}

\section{Injectivity of Eigenfunction}\label{one-one}
In this section, we will establish the following result:

\begin{thm}\thlabel{injective}
For each $\alpha$, let $f_\alpha$ correspond to the eigenfunction with eigenvalue $\alpha$. Then $f_\alpha$ is injective almost everywhere.
\end{thm}

Let $\zeta_k = |q_k\alpha - p_k|$, where $(\frac{p_k}{q_k})$ are the continued fraction convergents of $\alpha$. The following standard properties of continued fractions will be used to show that the eigenfunction $f$ as constructed in section \ref{construction} is injective.

\defin{
We say that a rational number $\frac{p}{q}$ is a \textbf{best approximation} of an irrational number $\alpha$ if for $0 < q' \le q$, and $p' \in \mathbb{Z}$, then 
$$|q\alpha - p| \le |q'\alpha - p'|.$$
}

\begin{prop}
For a given irrational number $\alpha$, the best approximations of $\alpha$ are the continued fraction convergents $(\frac{p_k}{q_k})$. 
\end{prop}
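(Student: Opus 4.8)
The plan is to prove the classical fact that the sequence of best approximations (in the sense just defined, using the weighted error $|q\alpha-p|$) coincides with the sequence of continued-fraction convergents $(p_k/q_k)$. I would organize the argument around two complementary inclusions: every convergent is a best approximation, and every best approximation is a convergent.

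\textbf{Step 1: Convergents are best approximations.} First I would recall the basic estimates for the convergents, namely $\zeta_k = |q_k\alpha - p_k|$ satisfies $\zeta_{k+1} < \zeta_k$ and, more precisely, $\frac{1}{q_{k+1}+q_k} < \zeta_k < \frac{1}{q_{k+1}}$, together with the fact that consecutive convergents lie on opposite sides of $\alpha$ (\thref{a}). Then, given $0 < q' \le q_k$ and $p' \in \mathbb{Z}$ with $(p',q') \ne (p_k,q_k)$, I would show $|q'\alpha - p'| \ge \zeta_{k-1} > \zeta_k$. The standard device is to write the vector $(p',q')$ as an integer combination of the two lattice vectors $(p_{k-1},q_{k-1})$ and $(p_k,q_k)$, say $(p',q') = u(p_{k-1},q_{k-1}) + v(p_k,q_k)$ with $u,v \in \mathbb{Z}$; this is possible because $p_{k-1}q_k - p_k q_{k-1} = \pm 1$. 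Since $0 < q' \le q_k$ one argues $u$ and $v$ cannot both be zero, $u \ne 0$ forces $|v q_k| \le q_k + |q'|$ hence a bounded combination, and then $q'\alpha - p' = u(q_{k-1}\alpha - p_{k-1}) + v(q_k\alpha - p_k)$ has its two summands of opposite sign (because $q_{k-1}\alpha - p_{k-1}$ and $q_k\alpha - p_k$ have opposite signs), so no cancellation helps and $|q'\alpha-p'| \ge |q_{k-1}\alpha - p_{k-1}| = \zeta_{k-1}$. This gives the first inclusion.

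\textbf{Step 2: Best approximations are convergents.} Conversely, suppose $p/q$ is a best approximation with $q \ge 1$. Choose $k$ with $q_k \le q < q_{k+1}$. By Step 1 the convergent $p_k/q_k$ satisfies $|q_k\alpha - p_k| \le |q\alpha - p|$; by the best-approximation property of $p/q$ (applied with $q' = q_k \le q$) we get the reverse inequality, so $|q_k\alpha - p_k| = |q\alpha - p|$. A short argument — e.g., using that the function $t \mapsto |t\alpha - [t\alpha]|$ achieving the same minimal value at two distinct denominators in a range $[q_k, q_{k+1})$ is impossible unless $q = q_k$, since any strictly larger $q$ in that range has $|q\alpha - p| > \zeta_k$ by the opposite-sides/lattice argument of Step 1 again — forces $q = q_k$ and then $p = p_k$. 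I would spell this out by noting that if $q_k < q < q_{k+1}$, writing $(p,q)$ in the $(p_{k-1},q_{k-1}),(p_k,q_k)$ basis shows $|q\alpha - p| \ge \zeta_{k-1} \ge \zeta_k = |q_k \alpha - p_k|$ with equality only in degenerate cases ruled out by $q \ne q_k$, contradicting that $p/q$ is at least as good as $p_k/q_k$ would allow — more carefully, that $|q\alpha-p|>\zeta_k$ strictly, contradicting the equality just derived.

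\textbf{Main obstacle.} The delicate point is the uniqueness/strictness in Step 2: ruling out a spurious ``best approximation'' with $q_k < q < q_{k+1}$ and handling the boundary case $q' = q_k$ where the weighted errors are equal but the fractions differ. This requires being careful with the inequalities $\frac{1}{q_{k+1}+q_k} < \zeta_k < \frac{1}{q_{k+1}}$ and with the sign bookkeeping in the lattice decomposition, and with the (mild) non-uniqueness inherent in the definition (a half-integer-type edge case when $\alpha$ is a half-integer, which does not arise here since $\alpha$ is irrational). The rest is the standard unimodular-lattice manipulation with $p_{k-1}q_k - p_k q_{k-1} = \pm 1$.
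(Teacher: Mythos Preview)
The paper does not actually prove this proposition: it is stated as a standard fact from the theory of continued fractions and immediately used to deduce \thref{crux}, with no argument given. So there is nothing in the paper to compare your proposal against.

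That said, your outline is the classical textbook proof (via the unimodular lattice basis $(p_{k-1},q_{k-1}),(p_k,q_k)$) and is correct in substance. One expository wrinkle in Step~1: the sentence ``$u \ne 0$ forces $|v q_k| \le q_k + |q'|$ hence a bounded combination'' is not the right lever. The standard argument is a sign analysis: if $u,v$ are both nonzero and of the \emph{same} sign then $|q'|=|uq_{k-1}+vq_k|\ge q_{k-1}+q_k>q_k$, contradicting $q'\le q_k$; hence $u,v$ have opposite signs (or one vanishes), and then $u(q_{k-1}\alpha-p_{k-1})$ and $v(q_k\alpha-p_k)$ have the \emph{same} sign, giving $|q'\alpha-p'|\ge |u|\zeta_{k-1}\ge \zeta_{k-1}$ when $u\ne 0$. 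In Step~2, the clean way to finish after obtaining $|q\alpha-p|=|q_k\alpha-p_k|$ is to note that $q\alpha-p=\pm(q_k\alpha-p_k)$ forces $(q\mp q_k)\alpha\in\mathbb{Z}$, which by irrationality of $\alpha$ gives $q=q_k$, $p=p_k$; this avoids the slightly circular appeal back to Step~1 that you sketch.
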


\begin{cor}\thlabel{crux}
If $p, q$ are integers so that $0 < q < q_k$, then $|q\alpha - p| \ge \zeta_{k - 1}$
\end{cor}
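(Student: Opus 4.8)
The plan is to deduce this directly from the preceding proposition that the continued fraction convergents are exactly the best approximations of $\alpha$, in the sense defined just above. Suppose, for contradiction, that there exist integers $p, q$ with $0 < q < q_k$ but $|q\alpha - p| < \zeta_{k-1} = |q_{k-1}\alpha - p_{k-1}|$. The idea is that such a pair would be a "better approximation at denominator $q$" than anything available among the convergents up through index $k-1$, contradicting the best-approximation characterization.

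More precisely, first I would recall the standard monotonicity fact that the sequence $\zeta_j = |q_j\alpha - p_j|$ is strictly decreasing in $j$ (this follows from $\zeta_{j+1} = |q_{j+1}\alpha - p_{j+1}|$ and the recursion \ref{b}, since $\zeta_{j-1} = a_j \zeta_j + \zeta_{j+1}$ with $a_j \ge 1$; alternatively it is immediate from Proposition \ref{a} together with the elementary identity $|p_{j+1}q_j - p_j q_{j+1}| = 1$, both already invoked in the proof of \thref{irrLem}). Hence for any $q$ with $0 < q < q_k$ we can locate the unique index $\ell \le k-1$ with $q_\ell \le q < q_{\ell+1}$ (using $q_1 = 1 \le q$). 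By the best-approximation property of the convergent $\frac{p_\ell}{q_\ell}$, since $0 < q_\ell \le q$, we get $|q_\ell \alpha - p_\ell| \le |q\alpha - p|$ for every integer $p$. But $|q_\ell\alpha - p_\ell| = \zeta_\ell \ge \zeta_{k-1}$ because $\ell \le k-1$ and $(\zeta_j)$ is decreasing. Chaining these inequalities yields $|q\alpha - p| \ge \zeta_\ell \ge \zeta_{k-1}$, which is the desired conclusion and contradicts our assumption.

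The only step requiring any care is making sure the best-approximation definition stated in the paper is applied with the correct quantifier order — it says $|q_\ell\alpha - p_\ell| \le |q'\alpha - p'|$ for all $0 < q' \le q_\ell$ and all $p' \in \mathbb{Z}$, whereas I want to bound $|q\alpha - p|$ with $q \ge q_\ell$. So I would instead apply the definition with the roles reversed: $\frac{p_\ell}{q_\ell}$ being a best approximation means precisely that no pair $(p', q')$ with $0 < q' \le q_\ell$ beats it; but what I actually need is that $\frac{p_\ell}{q_\ell}$, viewed as the best approximation with denominator at most $q$ (since $q_\ell \le q < q_{\ell+1}$ forces $q_\ell$ to be the largest convergent denominator not exceeding $q$), satisfies $|q_\ell\alpha - p_\ell| \le |q\alpha - p|$ for all $p$. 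This is exactly the standard statement that consecutive convergent denominators bracket all "record" denominators, and it is the content of the best-approximation proposition; I expect this bookkeeping — rather than any substantive estimate — to be the only thing one must state carefully. Everything else is the monotonicity of $(\zeta_j)$, which is routine.
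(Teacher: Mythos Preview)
Your approach is correct and is essentially the same as the paper's: deduce the inequality from the proposition that the best approximations of $\alpha$ are exactly the convergents. The paper's proof is shorter only because it skips your intermediate index $\ell$ and the monotonicity of $(\zeta_j)$; since the case $0<q\le q_{k-1}$ is immediate from the definition of best approximation applied to $\tfrac{p_{k-1}}{q_{k-1}}$, the paper treats only the range $q_{k-1}<q<q_k$.

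The one point worth flagging is that the ``bookkeeping'' you defer is in fact the entire substance of the argument, and the paper carries it out explicitly. You correctly observe that the definition gives $\zeta_\ell\le|q'\alpha-p'|$ only for $q'\le q_\ell$, not for $q'\ge q_\ell$, so one cannot apply it directly to your $q$. The fix (and this is what the paper does) is a minimality trick: assuming a counterexample $|q\alpha-p|<\zeta_{k-1}$ with $q_{k-1}<q<q_k$, choose such a pair with $q$ minimal; then $\tfrac{p}{q}$ is itself a best approximation (any $q'\le q_{k-1}$ already satisfies $|q'\alpha-p'|\ge\zeta_{k-1}>|q\alpha-p|$, and any $q_{k-1}<q'<q$ does too by minimality), hence must be a convergent, which is impossible since $q_{k-1}<q<q_k$. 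Your write-up would be complete once this step is spelled out; everything else you say is fine.
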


\begin{proof}
Suppose there is some $q$ and $p$ such that $|q\alpha - p| < \zeta_{k - 1}$ and $q_{k - 1} < q < q_k$. Let $p$ and $q$ be chosen so that $q$ and $p$ are minimal. Then $\frac{p}{q}$ is a best approximation of $\alpha$, but since $q_{k - 1} < q < q_k$, $\frac{p}{q}$ is not a continued fraction convergent. 
\end{proof}

\begin{lemma}
For all $k \ge 1$, $\zeta_{k - 1} > a_k\zeta_k$.
\end{lemma}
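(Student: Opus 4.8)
The plan is to prove the inequality $\zeta_{k-1} > a_k\zeta_k$ directly from the recursion for the $q_k$ together with the sign-alternation property in \thref{a}. Recall $\zeta_k = |q_k\alpha - p_k|$, and that the convergents alternate sides of $\alpha$; equivalently, the signed quantities $\delta_k := q_k\alpha - p_k$ alternate in sign, so $\delta_{k-1}$ and $\delta_{k+1}$ have the same sign, opposite to that of $\delta_k$. The key algebraic identity is obtained by multiplying the recursions in \raf{b}: from $p_{k+1} = a_k p_k + p_{k-1}$ and $q_{k+1} = a_k q_k + q_{k-1}$ we get $\delta_{k+1} = a_k \delta_k + \delta_{k-1}$, hence
$$\delta_{k-1} = \delta_{k+1} - a_k \delta_k.$$
Since $\delta_{k-1}$ and $-a_k\delta_k$ have the same sign (both opposite to $\delta_k$), and $\delta_{k+1}$ also has that same sign, taking absolute values gives $\zeta_{k-1} = |\delta_{k+1}| + a_k|\delta_k| = \zeta_{k+1} + a_k\zeta_k$. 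In particular $\zeta_{k-1} > a_k\zeta_k$ because $\zeta_{k+1} > 0$ (as $\alpha$ is irrational, $\delta_{k+1} \neq 0$).

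First I would set up the signed errors $\delta_k = q_k\alpha - p_k$ and record that, by \thref{a}, $\mathrm{sign}(\delta_k) = -\mathrm{sign}(\delta_{k-1})$ for all $k \geq 1$. Next I would derive the identity $\delta_{k+1} = a_k\delta_k + \delta_{k-1}$ by applying the recursions \raf{b} to numerator and denominator simultaneously. Then I would rearrange to $\delta_{k-1} = \delta_{k+1} - a_k\delta_k$ and observe that the two terms on the right-hand side have the same sign, so there is no cancellation when passing to absolute values; this yields $\zeta_{k-1} = \zeta_{k+1} + a_k\zeta_k$. Finally, since $\zeta_{k+1} = |\delta_{k+1}| > 0$ for irrational $\alpha$, the strict inequality $\zeta_{k-1} > a_k\zeta_k$ follows, and I would note the edge case $k=1$ is covered by the same argument using $q_0 = 0$, $p_0 = 1$ (so $\delta_0 = -1$) and $q_1 = 1$, $p_1 = a_0$.

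The only real subtlety — and the step I would be most careful about — is verifying the sign bookkeeping: that $\delta_{k-1}$, $\delta_{k+1}$, and $-a_k\delta_k$ all carry the same sign, so that $|\delta_{k+1} - a_k\delta_k| = |\delta_{k+1}| + a_k|\delta_k|$ rather than a difference. This is immediate from the alternation of signs of consecutive $\delta_k$ (equivalently, of the convergents relative to $\alpha$, which is exactly \thref{a}), but it is the crux of why the inequality holds with the factor $a_k$ rather than something weaker. Everything else is a routine manipulation of the continued-fraction recursions already recorded in \raf{b}.
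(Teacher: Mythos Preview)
Your proof is correct and is essentially the same as the paper's: both derive the identity $\zeta_{k-1} = a_k\zeta_k + \zeta_{k+1}$ from the recursion $\delta_{k+1} = a_k\delta_k + \delta_{k-1}$ together with the sign alternation of the $\delta_k$, and then conclude since $\zeta_{k+1} > 0$. The only cosmetic difference is that the paper splits into two cases according to whether $\frac{p_k}{q_k}$ lies above or below $\alpha$, whereas you handle both cases at once via the signed errors $\delta_k$; the content is identical.
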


\begin{proof}
First, observe that $\zeta_{k + 1} < \zeta_k$. This is true by the theory of continued fractions. In addition, if $\frac{p_k}{q_k} > \alpha$ then $\frac{p_{k + 1}}{q_{k + 1}} < \alpha$. Since $q_{k + 1} = a_kq_k + q_{k - 1}$, we have 

\begin{align*}
-\zeta_{k + 1}& = -\alpha q_{k + 1} + p_{k + 1} \\
&= -a_kq_k\alpha + a_kp_{k} - q_{k - 1}\alpha + p_{k - 1} \\
&= a_k\zeta_{k} - \zeta_{k - 1}\\
&< 0.
\end{align*} 
This implies that $a_k\zeta_k < \zeta_{k - 1}$ as desired. If $\frac{p_k}{q_k} < \alpha$, then 
\begin{align*}
\zeta_{k + 1} &= -\alpha q_{k + 1} + p_{k + 1} \\
&= -a_kq_k\alpha + a_kp_{k} - q_{k - 1}\alpha + p_{k - 1}\\
& = -a_k\zeta_{k} + \zeta_{k - 1}\\
& > 0.
\end{align*}

Hence $a_k\zeta_k < \zeta_{k - 1}$ as well. 
\end{proof}
\begin{rem}
The argument above shows that $a_k\zeta_k + \zeta_{k + 1} = \zeta_{k - 1}$
\end{rem}`

\begin{lemma}\thlabel{Telescope}
For each positive integer $\ell$, 
$$\sum_{i = \ell}^\infty (a_i - 1)\zeta_i = a_\ell\zeta_\ell - \sum_{i = \ell + 2}^\infty \zeta_i.$$
\end{lemma}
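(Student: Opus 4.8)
The plan is to prove the identity $\sum_{i=\ell}^\infty (a_i-1)\zeta_i = a_\ell\zeta_\ell - \sum_{i=\ell+2}^\infty \zeta_i$ by using the recurrence $a_k\zeta_k = \zeta_{k-1} - \zeta_{k+1}$ recorded in the remark immediately preceding the statement, and summing it telescopically. First I would rewrite the left-hand side by splitting $(a_i-1)\zeta_i = a_i\zeta_i - \zeta_i$ and substituting $a_i\zeta_i = \zeta_{i-1}-\zeta_{i+1}$, so that for each $i \ge \ell$ the summand becomes $\zeta_{i-1} - \zeta_{i+1} - \zeta_i$. Then the partial sum $\sum_{i=\ell}^{N}(\zeta_{i-1}-\zeta_{i+1}-\zeta_i)$ should be reorganized: the $\zeta_{i-1}-\zeta_{i+1}$ part telescopes to $\zeta_{\ell-1}+\zeta_\ell - \zeta_N - \zeta_{N+1}$, while $-\sum_{i=\ell}^N \zeta_i$ remains.

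Next I would take $N\to\infty$. Since $\zeta_k\to 0$ (these are the distances $|q_k\alpha - p_k|$, which shrink because $\zeta_k \le \varepsilon_k q_k \le 1/q_{k+1}$ and the $q_k$ grow exponentially, or simply because $\zeta_{k+1}<\zeta_k$ and the earlier lemma gives $a_k\zeta_k < \zeta_{k-1}$ with $a_k\ge 1$ forcing geometric-type decay when some $a_k\ge 2$ infinitely often, which holds since $\alpha$ is not of golden type), the boundary terms $\zeta_N,\zeta_{N+1}$ vanish in the limit. One should also check that $\sum \zeta_i$ converges so the rearrangement is legitimate; this again follows from $\zeta_i \le 1/q_{i+1}$ and exponential growth of $(q_k)$, exactly as in \thref{irrLem}. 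After taking the limit, the left-hand side equals $\zeta_{\ell-1} + \zeta_\ell - \sum_{i=\ell}^\infty \zeta_i$.

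Finally I would simplify $\zeta_{\ell-1} + \zeta_\ell - \sum_{i=\ell}^\infty \zeta_i$ into the desired form. Using $\zeta_{\ell-1} = a_\ell\zeta_\ell + \zeta_{\ell+1}$ (the remark again, with $k=\ell$), the expression becomes $a_\ell\zeta_\ell + \zeta_{\ell+1} + \zeta_\ell - \zeta_\ell - \zeta_{\ell+1} - \sum_{i=\ell+2}^\infty \zeta_i = a_\ell\zeta_\ell - \sum_{i=\ell+2}^\infty \zeta_i$, which is exactly the claimed identity.

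The computation is entirely routine; there is no genuine obstacle. The only point requiring a word of care is the interchange of limit and sum / the convergence of $\sum_i \zeta_i$, which justifies the telescoping manipulation — but this is immediate from the exponential growth of the denominators $q_k$ and the bound $\zeta_i \le 1/q_{i+1}$, so even this is not really an obstacle. I would present the proof as: substitute the recurrence, telescope the finite partial sum, pass to the limit using $\zeta_k\to 0$, and apply the recurrence once more at index $\ell$ to collect terms.
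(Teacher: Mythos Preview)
Your proposal is correct and takes essentially the same approach as the paper: both arguments telescope using the recurrence $a_k\zeta_k + \zeta_{k+1} = \zeta_{k-1}$ from the preceding remark. The paper's proof (modulo some evident typos in its displayed line) pulls out $a_\ell\zeta_\ell$ first and then recognizes the remaining sum as $\sum_{i\ge\ell}(\zeta_i - a_{i+1}\zeta_{i+1}) = \sum_{i\ge\ell+2}\zeta_i$, whereas you substitute $a_i\zeta_i = \zeta_{i-1}-\zeta_{i+1}$ termwise, telescope, and then apply the recurrence once more at $i=\ell$; these are cosmetic rearrangements of the same computation, and your version has the virtue of making the convergence justification explicit.
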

\begin{proof}
Rearranging the left hand side of the sum, we have
\begin{align*}
\sum_{i = \ell}^\infty (a_i - 1)\zeta_i &= a_\ell\zeta_\ell - \sum_{i = \ell}^\infty (\zeta_\ell - a_2\zeta_2) \\
&= a_\ell\zeta_\ell - \sum_{i = \ell + 2}^\infty \zeta_i
\end{align*}
by the previous remark.
\end{proof}

\begin{prop}\label{}
If $(z_k)$ is an integer sequence such that $0 \le |z_k| < a_k$, $\forall k \in \mathbb{N}$ and 
$$\sum_{k = 1}^\infty z_k\zeta_k = 0 \pmod{1},$$
then $z_k = 0$ for each $k$.

\end{prop}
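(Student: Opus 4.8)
The plan is to argue by contradiction. Suppose $(z_k)$ is not identically zero, and let $\ell$ be the smallest index with $z_\ell \neq 0$; since $z_k = 0$ for $k < \ell$, the hypothesis becomes $S := \sum_{k=\ell}^{\infty} z_k \zeta_k \equiv 0 \pmod 1$. (This series converges absolutely: $|z_k| \le a_k - 1$, and by the Remark following the lemma $\zeta_{k-1} > a_k\zeta_k$ one has $a_k\zeta_k = \zeta_{k-1} - \zeta_{k+1}$, so $\sum_{k\ge 1} a_k\zeta_k$ telescopes to $\zeta_0 + \zeta_1 < \infty$.) I will show that in fact $0 < |S| < 1$, so that $S$ cannot be an integer, contradicting $S \equiv 0 \pmod 1$; this forces every $z_k$ to vanish.

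The key step is to split off the leading term $z_\ell \zeta_\ell$ and bound the tail $R := \sum_{k=\ell+1}^{\infty} z_k\zeta_k$. Using $|z_k| \le a_k - 1$, then \thref{Telescope} with starting index $\ell+1$, and then the identity $a_{\ell+1}\zeta_{\ell+1} = \zeta_\ell - \zeta_{\ell+2}$,
$$|R| \;\le\; \sum_{k=\ell+1}^{\infty} (a_k - 1)\zeta_k \;=\; a_{\ell+1}\zeta_{\ell+1} - \sum_{i=\ell+3}^{\infty}\zeta_i \;\le\; a_{\ell+1}\zeta_{\ell+1} \;=\; \zeta_\ell - \zeta_{\ell+2} \;<\; \zeta_\ell .$$
Now $S = z_\ell\zeta_\ell + R$ with $|R| < \zeta_\ell$, so by the reverse triangle inequality together with $|z_\ell| \ge 1$,
$$|S| \;\ge\; |z_\ell|\,\zeta_\ell - |R| \;>\; \bigl(|z_\ell| - 1\bigr)\zeta_\ell \;\ge\; 0 ,$$
hence $S \neq 0$. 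For the upper bound, $z_\ell \in \mathbb{Z}$ with $|z_\ell| < a_\ell$ gives $|z_\ell| + 1 \le a_\ell$, so
$$|S| \;\le\; |z_\ell|\,\zeta_\ell + |R| \;<\; \bigl(|z_\ell| + 1\bigr)\zeta_\ell \;\le\; a_\ell\zeta_\ell \;=\; \zeta_{\ell-1} - \zeta_{\ell+1} \;<\; \zeta_{\ell-1} \;\le\; \zeta_0 = 1 ,$$
using that $(\zeta_k)$ is decreasing and that $\zeta_0 = |q_0\alpha - p_0| = 1$ since $q_0 = 0$ and $p_0 = 1$. Together these give $0 < |S| < 1$, the desired contradiction.

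I do not expect a genuine obstacle: the arithmetic content is already carried by \thref{Telescope} and the recursion $\zeta_{k-1} = a_k\zeta_k + \zeta_{k+1}$, and the proposition is essentially the endgame. The one thing requiring care is to keep the inequalities strict in the right spots — in particular that $|R| < \zeta_\ell$ strictly (which uses $\zeta_{\ell+2} > 0$) and that the step $\bigl(|z_\ell|+1\bigr)\zeta_\ell \le a_\ell\zeta_\ell$ genuinely uses the integrality of $z_\ell$, not merely the bound $|z_\ell| < a_\ell$ for a real number. It is also worth checking the boundary case $\ell = 1$ separately, where the last chain of inequalities simply reads $|S| < \zeta_0 = 1$.
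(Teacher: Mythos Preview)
Your argument is correct and follows essentially the same route as the paper: assume a first nonzero index $\ell$, then use the recursion $\zeta_{k-1}=a_k\zeta_k+\zeta_{k+1}$ together with \thref{Telescope} to trap $|S|$ strictly between $0$ and $1$. The only cosmetic difference is that you isolate the leading term and bound the tail $R$ separately (getting $|S|<a_\ell\zeta_\ell<\zeta_{\ell-1}\le\zeta_0=1$), whereas the paper bounds the entire sum at once (getting $|S|\le\sum_{k\ge 1}(a_k-1)\zeta_k<a_1\zeta_1<1$); both are the same telescoping estimate read at different starting indices.
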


\begin{proof}
Suppose not all $z_k$ are $0$. Let $\ell$ be the smallest nonnegative integer for which $z_\ell \neq 0$. Then 
\begin{align*}
\left|\sum_{k = \ell}^\infty z_k\zeta_k\right| &\ge \zeta_\ell - \sum_{k = \ell + 1}^\infty (a_k - 1)\zeta_k \\ &= \sum_{k = \ell}^\infty (\zeta_k - a_{k + 1}\zeta_{k + 1}) \\&> 0,
\end{align*}
by \thref{Telescope}. Furthermore, if $a_k = 1$, then $z_k = 0$ so we can assume that each $a_k > 1$ for $k > 1$, and we have 
\begin{align*}
    \left|\sum_{i = \ell}^\infty z_k\zeta_k\right| &\le \sum_{k = 1}^\infty (a_k - 1)\zeta_k \\&= a_1\zeta_1 - \sum_{k = 3}^\infty \zeta_k \\&< a_1\zeta_1 \\ &< 1
\end{align*}
since $\zeta_1 = \alpha - a_0 < \frac{1}{a_1}$ where $[\cdot]$ is the greatest integer function. Hence $z_k = 0$ for each $k$. 
\end{proof}

We are now ready to prove the main theorem of the section.
\begin{proof}[Proof of \thref{injective}]
If $x$ is on column $C_k$, let $\ell_{k, x}$ be the level on which $x$ lies. Let column $k_x$ be the first column that contains $x$ and $m_{i, x}$ represent the $m_{i, x}$th portion of the $\ell_{i, x}$th level that contains $x$. From careful inspection of the proof of theorem \ref{mainthm}, we see that the quantity
$$\exp\left(2\pi i \alpha\left(\ell_{k_x, x} + \sum_{i = k_x}^N m_{i,x}q_i\right)\right)$$
converges to $f_\alpha(x)$ as $N$ gets large. Suppose $f_\alpha(x) = f_\alpha(y)$. Without a loss of generality, let $k_x \ge k_y$. As above, we assume that $a_k \neq 1$ for each $k$, because otherwise, $m_{i, x} = 0$. Then 
$$\lim_{N \to \infty} \exp\left(2\pi i \alpha\left(\ell_{k_x, x} + \sum_{i = k_x}^N m_{i, x}q_i\right)\right) = \lim_{N \to \infty} \exp\left(2\pi i \alpha\left(\ell_{k_y, y} + \sum_{i = k_y}^N m_{i, y}q_i\right)\right)$$
which gives
$$\lim_{N \to \infty} \exp\left(2\pi i \alpha\left(\ell_{k_x, x} - \ell_{k_y, y} - \sum_{i = k_y}^{k_x - 1} m_{i, y}q_i + \sum_{i = k_x}^N (m_{i, x} - m_{i, y})q_i\right)\right) = 1$$
and hence
$$(\ell_{k_x, x} - \ell_{k_y, y})\alpha - \sum_{i = k_y}^{k_x - 1} (-1)^{i + 1}m_{i, y}\zeta_i + \sum_{i = k_x}^\infty (-1)^{i + 1}(m_{i, x} - m_{i, y})\zeta_i \pmod{1} = 0.$$
Note that the $(-1)^i$ are present because of  \thref{a}. Let 
$$C = \ell_{k_x, x} - \ell_{k_y, y} - \sum_{i = k_y}^{k_x - 1} m_{i, y}q_i.$$
First, we claim that if $C = 0$, then $x = y$. To prove this, if $C = 0$, then $\ell_{k_x, x} = \ell_{k_x, y}$, because if $k_x > k_y$, then $y \not\in C_{k_x} \setminus C_{k_y}$ and $x \in C_{k_x} \setminus C_{k_y}$. But that means that $\ell_{k_x, x} \neq \ell_{k_x, y}$ since level $\ell_{k_x, x}$ in column $C_{k_x}$ are all spacers but $\ell_{k_x, y}$ is not a spacer. Hence $k_x = k_y$. Since $m_{e, x}$ and $m_{e, y}$ are less than  $a_e$, we must have $|m_{e, x} - m_{e, y}| < a_e$ for all $e \ge k_x$. By the previous proposition, this must mean that $m_{e, x} = m_{e, y}$ for each nonnegative integer $e \ge k_x$. This must imply that $x = y$ since if $x \neq y$, let $z$ be an integer such that $\frac{1}{q_z} < |x - y|$. Then on column $C_z$, $x$ and $y$ must be on different levels. But $\ell_{z, x} = \ell_{z, y}$, a contradiction. \\\\
Next, we claim that if $C \neq 0$, then $|C| \le q_{k_x} - 1$ and therefore $C\alpha \pmod{1} \ge \zeta_{k_x - 1}$. To prove this, first, note that 
\begin{align*}
a_{k_x - 1}q_{k_x - 1} \le |\ell_{k_x, x}| < q_{k_x}, \\
a_{k_y - 1}q_{k_y - 1} \le |\ell_{k_y, y}| < q_{k_y}.
\end{align*}
In addition, by Formula \ref{b}, 
\begin{align*}
    0 & \le \ell_{k_y, y} + \sum_{i = k_y}^{k_x - 1} m_{i, y} q_i \\
    &\le q_{k_y} - 1 + \sum_{i = k_y}^{k_x - 1} (a_i - 1)q_i \\
    & = q_{k_y} - 1 + a_{k_x - 1}q_{k_x - 1} - q_{k_y} - \left(\sum_{i = k_y - 1}^{k_x - 2} (a_iq_i - q_{i + 1})\right) \\
    & = q_{k_y} - 1 + a_{k_x - 1}q_{k_x - 1} - q_{k_y} - \left(\sum_{i = k_y - 1}^{k_x - 3} q_i\right)\\
    & < q_{k_y} + a_{k_x - 1}q_{k_x - 1} - 1.
\end{align*}
Hence, $|C| \le q_{k_y} - 1$ or $|C| \le q_{k_x} - 1$. In either case, $|C| \le q_{k_x} - 1$. By \thref{crux}, $C\alpha \pmod{1} \ge \zeta_{k_x - 1}$. \\\\
Next, notice that if $C \neq 0$, then $\zeta_{k_x} < \frac{1}{2}$. Indeed, if $a_1 = 1$, then $k_x > 2$ since $C$ is nonzero. As a result, $\zeta_{k_x - 1} < \frac{1}{q_{k_x}} = \frac{1}{a_{k_x - 1}q_{k_x - 1} + q_{k_x - 2}} \le \frac{1}{2}$. Otherwise, 
$$\zeta_{k_x - 1} < \frac{1}{q_{k_x}} = \frac{1}{a_{k_x - 1}q_{k_x - 1} + q_{k_x - 2}} \le \frac{1}{2q_{k_x - 1} + q_{k_x - 2}} < \frac{1}{2}.$$ \\\\
Finally, if $C \neq 0$, then $C\alpha \pmod{1} \ge \zeta_{k_x}$ since for all $m$, $|q_m\alpha - p_m| \le |q\alpha - p|$ for $|q| \le q_m$ and $p \in \mathbb{Z}$ (these are properties of continued fractions). But by \thref{Telescope},
\begin{align*} 
\sum_{i = k_x}^\infty (-1)^{i + 1}(m_{i, x} - m_{i, y})\zeta_i  
&\le  \sum_{i = k_x}^\infty (a_{i} - 1)\zeta_i \\ 
&= a_{k_x}\zeta_{k_x} - \sum_{i = k_x + 2}^\infty \zeta_i \\ 
&< a_{k_x}\zeta_{k_x} \\ 
&< \zeta_{k_x - 1}  \\ 
&\le |C\alpha \pmod{1}|.
\end{align*} 
Thus, 
\[\left|C\alpha \pmod{1} - \sum_{i = k_x}^\infty (-1)^{i + 1}(m_{i, x} - m_{i, y})\zeta_i \right| > 0.\]
Now choose a suitable equivalence class for $C\alpha$ so that $|C\alpha \pmod{1}| < \frac{1}{2}$. Note that regardless of the equivalence class chosen, $|C\alpha \pmod{1}| \ge \zeta_{k_x - 1}$, so the above argument  still holds for this equivalence class. We get
$$\left|C\alpha \pmod{1} - \sum_{i = k_x}^\infty (-1)^{i + 1}(m_{i, x} - m_{i, y})\zeta_i \right| < \frac{1}{2} + \zeta_{k_x - 1} < \frac{1}{2} + \frac{1}{2} = 1$$
by our work above. Hence, 
$$C\alpha \pmod{1} - \sum_{i = k_x}^\infty (-1)^{i + 1}(m_{i, x} - m_{i, y})\zeta_i \neq 0 \pmod{1}.$$
This is a contradiction, and so  $x = y$. This completes the proof that $f_\alpha$ is injective.
\end{proof}

\begin{rem}
The best approximation property of continued fractions is crucial to our argument that the eigenfunction is injective almost everywhere.
\end{rem}

\coro{
We have the following: $f_\alpha^{-1}(\mathfrak{B}(\bs^1)) = \mathfrak{B}(X)$. 
}
\begin{proof}
The sets $\mathcal{G} = \{f_\alpha^{-1}(I): I \text{ is an interval with rational endpoints}\}$ generates $f_\alpha^{-1}(\mathfrak{B}(\bs^1))$. Since $f_\alpha$ is injective, $\mathcal{G}$ separates points so by Blackwell's theorem, it generates $\mathfrak{B}(X)$.  
\end{proof}

\coro{If $(X_\alpha, \mu)$ has infinite measure, then the pushforward measure ${f_\alpha}_*\mu$ is an infinite invariant measure for rotation by $\alpha$ and is mutually singular with respect to Lebesgue measure $m$ on $S^1$}
\begin{proof}
As $f_\alpha$ is an isomorphism, ${f_\alpha}_*\mu$ is a $\sigma$-finite infinite measure that is ergodic with respect to the irrational rotation by $\al$. It is well known (see e.g. \cite{Aa97}) that any $\sigma$-finite infinite $S$-ergodic measure is mutually singular with respect to any finite $S$-ergodic measure for an ergodic transformation $S$. Hence, ${f_\alpha}_*\mu$ is mutually singulary with respect to to Lebesgue measure.
\end{proof}

\begin{rem}
 It is not the case that if $(X, T, \mu, \mathcal{S})$ is any infinite ergodic measure preserving system with an irrational eigenvalue $\alpha$ and eigenfunction $g$, that $g_*\mu$ is mutualy singular with respect to Lebesgue measure $m$. An example of such a system is the product of any infinite measure preserving weakly mixing system and the irrational rotation by $\alpha$. This system is ergodic and has a factor to the irrational rotation by $\alpha$. The pushforward measure of that factor is a measure equivalent to the Lebesgue measure that takes the value of infinity for any positive Lebesgue measure sets and $0$ for all Lebesgue measure zero sets.
\end{rem}

\begin{rem}
There exist infinite $\sigma$-finite invariant measures for irrational rotations that are not ergodic. Let $R_\alpha$ be an irrational rotation. Then there exist infinite $\sigma$-finite nonatomic ergodic invariant measures $\mu_1$ and $\mu_2$ that are mutually singular \cite{Sc77}. Let $\mu_3=\mu_1+\mu_2$. Then $\mu_3$ is an infinite $\sigma$-finite nonatomic ergodic measure for $R_\alpha$. There is a measurable set $A$ such that $\mu_1(A)=0$ and $\mu_2(A)>0$. Let $A^*=\cup_{n=-\infty}^\infty R^n_\alpha(A)$. Then $A^*$ is $R_\alpha$ invariant but $\mu_3(A^*)>0$ and $\mu_3(S^1\setminus A^*)>0$, so $(S^1,R_\alpha,\mu_3)$ is not ergodic, though it is conservative.  This also shows that invariant infinite measures on $(S^1,R_\alpha)$ need not be rank-one.
\end{rem}



\section{Probability Preserving Factors of $T_\al$}\label{discretespectrum}

We start with a discussion on the notion of factors. Let  $(X, T, \mu, \mathcal{B})$ be a nonsingular dynamical system with $\mu$ a $\sigma$-finite, possibly infinite, measure. A \textbf{factor algebra} is a (countably generated) sub-$\sigma$-algebra $\mathcal F$ of $\mathcal{B}$ such that $T\mathcal F=\mathcal F$. A \textbf{$\sigma$-finite factor algebra} is a factor algebra that is $\sigma$-finite. For example, if $T$ is infinite measure-preserving and $\pi:X\times X\to X$ is the usual projection to the first coordinate, then $\pi^{-1}(\mathcal B)$ is a factor algebra of $T\times T$ that is not $\sigma$-finite. Also, if $T$ is infinite measure-preserving and has rational eigenvalues it will have nontrivial non-$\sigma$-finite factor algebras; for example, if $T$ is an infinite measure-preserving transformation and has an set $A$ such that $TA = A^c$  (this is the case for the Hajian--Kakutani transformation, for example), then if $\mathcal F$ is the sigma-algebra generated by $\{A, TA\}$, then $\mathcal F$ is factor algebra  that is not $\sigma$-finite.

 A transformation $T$ is said to be \textbf{prime} if  every factor algebra $\mathcal F$ satisfies $\mathcal F=\{\emptyset, X\}$ or $\mathcal F=\mathcal B$ (of course, all equalities are mod $\mu$),  and it is \textbf{prime in the measure-preserving sense} \cite{AaNa87} if  every $\sigma$-finite factor algebra  $\mathcal F$ satisfies $\mathcal F=\{\emptyset, X\}$ or $\mathcal F=\mathcal B$.

In \cite{AaNa87}, Aaronson and Nadkarni constructed a compact group rotation with an infinite invariant measure that is prime in the measure-preserving sense, but it has non-sigma-finite factors. Aaronson notes in \cite[3.1]{Aa97} that the Hajian--Kakutani transformation also satisfies these properties (and we know it has non-sigma finite factors). An infinite measure-preserving transformation that is prime was constructed in Rudolph--Silva \cite{RuSi89}; this transformation is not a group rotation as it is weakly mixing, in fact its Cartesian square is ergodic.

To work with possibly non-sigma-finite factors of an infinite measure-preserving transformation we will consider a probability measure equivalent to the infinite invariant measure; with respect to this new equivalent probability measure the transformation will be nonsingular.  Let $(X, T, \mu, \mathcal{B})$ be a nonsingular dynamical system with $\mu$ a probability  measure. We say that $(Y, S, \nu, \mathcal{C})$ is a \textbf{factor} of  $(X, T, \mu, \mathcal{B})$ (sometimes for emphasis we may call it a nonsingular factor) if there is a Borel map $\phi: X' \to Y'$ such that $\phi \circ T = S \circ \phi$ and $\nu$ is equivalent to $\phi_*\mu$, where $X' = X \setminus X_0$, $Y' = Y \setminus Y_0$ with $\mu(X_0) = \nu(Y_0) = 0$.  It is clear that then $\phi^{-1}(\mathcal C)$ is a factor algebra of $T$, and similarly one can show that factor algebras give rise to factors, see e.g.,  \cite{Ru90,Aa97}.  When $(X, T, \mu, \mathcal{B})$ is infinite measure-preserving, its factors will be with respect to the nonsingular system obtained when we take a probability measure equivalent to $\mu$, and one can see that this is independent of what equivalent probability measure we choose. 
We will  say $T$ has a probability preserving factor if there exists a probability preserving dynamical system $(Y, S, \nu, \mathcal{C})$  that is a (nonsingular) factor of $T$. 

Aaronson and Nadkarni \cite{AaNa87} write \lq\lq At this time, we know very little about the factors of non-singular group rotations. For example, we do not know if a non-trivial ergodic non-singular group rotation can have weakly mixing factors, or alternatively, have no non-trivial factors.\rq\rq   In this section we construct a nonsingular measure on an irrational rotation that has no weakly mixing factors. In \cite[Problem 5.5]{GlWe16}, Glasner and Weiss  ask  if a nonsingular system that does not admit nontrivial probability preserving factors with discrete spectrum is weakly mixing. Our example answers this in the negative. Finally, we recall a question from \cite{AaNa87}: they note that if a finite measure-preserving transformation is not weakly mixing, that transformation is prime if and only it is a rotation on a prime number of points, and ask if this true in the nonsingular case. If we knew our example was prime it would answer this in the negative. 

In this section, we will denote $\mu_\alpha$ as the measure for the transformation $T_\al$ we constructed. We now state the main theorem of this section.
\begin{thm}\thlabel{DiscreteSpectrumFactor}
If $X_\alpha$ has infinite measure and $\alpha$ is badly approximable, then every probability preserving factor of $(X_\alpha, T_\alpha,\mu_\alpha, \mathcal{B})$ is weakly mixing. 
\end{thm}


We choose a probability measure $\mu$ equivalent to $\mu_\alpha$, and  will consider the nonsingular system  $(X, T_\alpha, \mu, \mathcal{B}_\alpha)$.  By \thref{uniqueEigen}, eigenfunctions of $T_\alpha$ are of the form $f_\alpha^n$. Since $(f_\alpha)_*\mu_\alpha$ is mutually singular to Lebesgue measure $m$, $(f_\alpha)_*\mu$ is mutually singular to $m$. 
\begin{lemma} \thlabel{equivalentLebesgue}
Suppose $\phi: (X, T_\alpha, \mu, \mathcal{B}) \to (Y, S, \nu, \mathcal{C})$ is a factor map with $\nu$ equivalent to $\phi_*\mu$. If $g$ is an eigenfunction of $S$ with irrational eigenvalue $\beta$, then $g \circ \phi$ is an eigenfunction of $(X, T_\alpha, \mu, \mathcal{B})$ and $\beta = \alpha^n$, $g \circ \phi = f^n$ for some $n$. Furthermore, if $(Y, S, \nu, \mathcal{C})$ is a probability preserving system, then $g_*\nu$ is equal to the Lebesgue measure $m$ and thus $g_*\phi_*\mu$ is equivalent to $m$
\end{lemma}
\begin{proof}
We will first show that $g_*\nu$ is equal to the Lebesgue measure. Let $\lambda_1 = e^{2\pi i \beta}$. As $g(S) = \lambda_1 g$, it follows that for all Borel subsets $A$ of $S^1$, we have
$$g_*\nu(A) = \nu(g^{-1}(A)) = \nu(S^{-1}g^{-1}(A)) = \nu(g^{-1}(\lambda_1^{-1} A)) = g_*\nu(\lambda_1^{-1} A).$$
Here, we used the fact that
$$S^{-1}g^{-1}(A) = g^{-1}(\lambda_1^{-1}(A))$$
which comes from the fact that
$$S^{-1}(g^{-1}(A)) = \{x: g(S(x)) \in A\} = \{x: \lambda_1 g(x) \in A \} = \{x: g(x) \in \lambda_1^{-1}(A)\} = g^{-1}(\lambda_1^{-1}(A)).$$
As $\lambda_1$ is irrational, the Lebesgue measure is the only probability measure on $S^1$ which is $\lambda_1$-invariant so $g_*\nu = m$. As $\nu$ is equivalent to $\phi_*\mu$, it follows that $g_*\nu$ is equivalent to $g_*\phi_*\nu$ so $g_*\phi_*\nu = (g \circ \phi)^*\nu$ is equivalent to $m$. Lastly, we see that
$$g \circ \phi \circ T = g \circ S \circ \phi = \lambda_1 g \circ \phi$$
and since $g$ is the composition of Borel-measurable maps, it follows that $g$ is Borel-measurable, so $g$ is an eigenfunction of $T_\alpha$ so $\beta = \alpha^n$ and $g \circ \phi = f^n$ for some integer $n$.
\end{proof}
Let $n: S^1 \to S^1$ denote the map $x \mapsto nx \pmod{1}$.
\begin{lemma} \thlabel{measurable}
If $A$ is a Borel set, then $nA$ is also a Borel set.
\end{lemma}
\begin{proof}
Write $\mathbb{R} / \mathbb{Z} = [0, 1/n) \sqcup [1/n, 2/n) \sqcup \cdots \sqcup [(n - 1)/n, 1)$. Let $A_i = A \cap [i/n, (i + 1)/n)$. Note that
$$nA = \bigcup nA_i.$$
Next, note that $n: [i/n, (i + 1)/n) \to [0, 1)$ is a homeomorphism. In particular, $n$ is the inverse of a Borel map from $[0, 1)$ to $[i/n, (i + 1)/n)$. Hence $nA_i$ is Borel since Borel sets are preserved under preimages of Borel maps. Hence $nA$ is Borel.
\end{proof}
\begin{lemma} \thlabel{zero}
Let $n$ be a nonzero integer. Then for $A$ a $m$-measurable set in $\mathbb{S}^1$, $m(A) = 0 \iff m(nA) = 0$
\end{lemma}
\begin{proof}
Suppose $n > 0$. A similar argument works if $n < 0$. Write $\mathbb{R} / \mathbb{Z} = [0, 1/n) \sqcup [1/n, 2/n) \sqcup \cdots \sqcup [(n - 1)/n, 1)$. Let $A_i = A \cap [i/n, (i + 1)/n)$. Note that
$$nA = \bigcup nA_i.$$
If $m(A) = 0$, then $m(A_i) = 0$ and since for any subset $B$ of $[i/n, (i + 1)/n)$, $m(nB) = nm(B)$, it follows that $m(nA_i) = 0$ as well so $m(nA) = 0$. Conversely, if $m(nA) = 0$, then $m(nA_i) = nm(A_i) = 0$, so $m(A_i) = 0$ for all $i$.
\end{proof}
\begin{lemma} \thlabel{singularLebesgue}
If $\eta$ is a Borel measure on $\mathbb{S}^1$ that is mutually singular with respect to $m$, then $n_* \eta$ is not equivalent to $m$ for all integers $n$.
\end{lemma}
\begin{proof}
This is evidently true if $n = 0$. Since $\eta$ is mutually singular with respect to $m$, there exists a Borel set $A \subset S^1$ such that $\eta(A) > 0$ and $m(A) = 0$. As $A$ is Borel, by \thref{measurable}, $nA$ is Borel, so $n^{-1}(nA)$ is Borel. In particular, $nA$ is $n_*\eta$-measurable. Since $n^{-1}(nA) \supset A$, it follows that $n_*\eta(nA) = \eta(n^{-1}(n(A))) \ge \eta(A) > 0$. But by \thref{zero}, $m(nA) = 0$. Hence, $n_*\eta$ is not equivalent to $m$.
\end{proof}
\begin{proof}[Proof of \thref{DiscreteSpectrumFactor}]
By \thref{equivalentLebesgue}, to show that $(X, T_\alpha, \mu, \mathcal{B})$ has no non-weakly mixing factors, it suffices to show that $f^n_*\mu = n_*f_*\mu$ is not equivalent to $m$ for all integers $n$. Since $f_*\mu$ is mutually singular with respect to $m$, by \thref{singularLebesgue}, $n_*f_*\mu$ is not equivalent to $m$ for all integers $n$. Hence, $(X, T_\alpha, \mu, \mathcal{B})$ has no probability preserving factors with nontrivial eigenfunctions, and hence all probability preserving factors of $T_\al$ are weakly mixing.
\end{proof}

\coro {The transformation $(X_\al, T_\al, \mu_\al, \mathcal{B})$ has no nontrivial probability preserving factors with discrete spectrum.}
\begin{proof}
Any weakly mixing probability system has no eigenfunctions, and hence cannot have discrete spectrum unless it is trivial. Since all probability preserving factors of $(X_\al, T_\al, \mu_\al, \mathcal{B})$ are weakly mixing, it follows that $T_\al$ has no nontrivial probability preserving factors with discrete spectrum.
\end{proof}

We know that $T_\al$ is not weakly mixing, but by Corollary 9.7, it does not admit probability preserving factors with discrete spectrum, thus answering Problem 5.5 in \cite{GlWe16}.

\section{Rigidity}\label{badlyrigid} 
\begin{defn}
A measure-preserving transformation $T$ is \textbf{rigid} if there is a sequence $(n_k) \to \infty$ such that for all sets $A$ of finite measure
$$\lim_{k \to \infty}\mu(T^{n_k}(A)\Delta A) =0.$$
\end{defn}
\begin{defn}
A measure-preserving transformation $T$ is \textbf{partially rigid} if there is a sequence $(n_k) \to \infty$, and a constant $c  > 0$ such that for all sets $A$ of finite measure
$$\liminf_{k \to \infty}\mu(T^{n_k}(A)\cap A) \geq c\mu(A) .$$ In this case we say that $T$ has \textbf{partial rigidity constant} of at least $c$.
\end{defn}

\begin{defn}
The \textbf{centralizer} of a transformation $(X, \mu, T)$ is the set 
$$C(T) = \{S: X \to X \  | \text{ $S$ is $\mu$ measure-preserving and  } ST = TS \text{ a.e.}\}.$$
\end{defn}

In the finite case, all irrational rotations are rigid with respect to the Lebesgue measure. It has been shown that all rigid transformations have an uncountable centralizer (\cite{Ki86} for finite measure, and a similar argument works in infinite measure \cite{GKSXZ}). In this section, we explore the rigidity of $T_\alpha$.

\begin{thm}
$T_\alpha$ is partially rigid, with a partial rigidity constant of at least $\frac{1}{2}.$
\end{thm}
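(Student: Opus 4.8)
The plan is to use the column structure of $T_\alpha$ directly, exploiting the fact that when we pass from $C_k$ to $C_{k+1}$ we cut $C_k$ into $a_k$ equal-width subcolumns and stack them, so that most of each level of $C_k$ returns to itself under $T_\alpha^{q_k}$. Concretely, when $C_{k+1}$ is built from $C_k$, a level $C_k(n)$ is subdivided into $a_k$ pieces $C_k^0(n),\dots,C_k^{a_k-1}(n)$ of equal width, and for $j = 0,\dots,a_k-2$ the piece $C_k^j(n)$ is mapped by $T_\alpha^{q_k}$ onto $C_k^{j+1}(n)$ (the next subcolumn up), hence stays inside $C_k(n)$; only the last piece $C_k^{a_k-1}(n)$ (which has relative width $1/a_k \le 1/2$) leaves $C_k(n)$ under $T_\alpha^{q_k}$. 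Therefore $m\big(C_k(n) \cap T_\alpha^{q_k}(C_k(n))\big) \ge (1 - 1/a_k)\, m(C_k(n)) \ge \tfrac12\, m(C_k(n))$ for every level and every $k$ with $a_k \ge 2$. Since $\alpha$ is not of golden type, there is a subsequence $k_j \to \infty$ with $a_{k_j} \ge 2$, giving $n_j := q_{k_j} \to \infty$.

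The key step is then to upgrade this level-by-level estimate to an arbitrary finite-measure set $A$. First I would reduce to the case where $A$ is a finite union of levels of some fixed column $C_K$: by the rank-one approximation property (the defining property of $T_\alpha$), given $\varepsilon > 0$ there is such a $\widehat C$ with $m(A \triangle \widehat C) < \varepsilon$, and since $T_\alpha$ is measure-preserving, $m(T_\alpha^{n}(A)\cap A)$ and $m(T_\alpha^{n}(\widehat C)\cap \widehat C)$ differ by at most $2\varepsilon$ uniformly in $n$. So it suffices to prove $\liminf_j m(T_\alpha^{q_{k_j}}(\widehat C)\cap \widehat C) \ge \tfrac12\, m(\widehat C)$ for a union of levels $\widehat C$ of $C_K$. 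Now fix such a $\widehat C$ and take $k_j \ge K$. Each level of $C_K$ appearing in $\widehat C$ is, in column $C_{k_j}$, a disjoint union of levels of $C_{k_j}$ (the refinement of the Rokhlin tower); applying the previous paragraph's estimate to each level $L$ of $C_{k_j}$ contained in $\widehat C$ gives $m(T_\alpha^{q_{k_j}}(L)\cap L) \ge \tfrac12\, m(L)$, and since the images $T_\alpha^{q_{k_j}}(L)$ for distinct levels $L$ are disjoint, summing over all such $L$ yields $m(T_\alpha^{q_{k_j}}(\widehat C)\cap \widehat C) \ge \tfrac12\, m(\widehat C)$. Combining with the approximation bound, $\liminf_j m(T_\alpha^{q_{k_j}}(A)\cap A) \ge \tfrac12\, m(A) - 3\varepsilon$, and letting $\varepsilon \to 0$ completes the proof with partial rigidity constant $c = \tfrac12$.

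The main obstacle, and the point that needs the most care in writing, is the bookkeeping that each level of $C_{k_j}$ lying inside $\widehat C$ returns to itself under $T_\alpha^{q_{k_j}}$ up to the single ``top-of-a-stack'' piece — one must track that within $C_{k_j}$ the level $C_{k_j}(n)$ maps under $T_\alpha^{q_{k_j}}$ into $C_{k_j}(n)$ on the part coming from subcolumns $0$ through $a_{k_j}-2$, and that the exceptional part leaving has relative measure exactly $1/a_{k_j} \le 1/2$; equivalently, one iterates the single-step analysis already carried out in the proof of \thref{eigenvals} (where exactly one of the $a_k$ subintervals fails to return). A secondary subtlety is ensuring disjointness of the sets $T_\alpha^{q_{k_j}}(L)\cap L$ over distinct levels $L$ of $C_{k_j}$ so the sum is legitimate, which holds because the $L$ themselves are disjoint and $T_\alpha$ is invertible. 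Neither difficulty is deep; everything is built from the cutting-and-stacking description in Section~\ref{construction}.
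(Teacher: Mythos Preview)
Your proposal is correct and follows essentially the same approach as the paper: both use that $T_\alpha^{q_k}(C_k^j(n)) = C_k^{j+1}(n)$ for $0\le j<a_k-1$ to get $m(T_\alpha^{q_k}(C_k(n))\cap C_k(n))\ge \frac{a_k-1}{a_k}\,m(C_k(n))$, then restrict to the subsequence where $a_k\ge 2$ and pass from single levels to arbitrary sets. The paper's write-up simply asserts that it suffices to check levels (citing that levels generate $\frk B$), whereas you spell out the approximation argument explicitly; otherwise the arguments are identical.
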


\begin{proof}
For convenience of notation we denote $T_\alpha$ by just $T$.
Consider the modified sequence of heights $(q_k)$ that excludes all heights $q_n$ for $n$ such that $a_n = 1$. Since the levels of the columns generate $\frk B$ by the definition of rank-one, it is sufficient to show that for all levels $I$, $$\lim_{k \to \infty} \mu(T^{q_k}(I) \cap I) \geq \frac{1}{2}\mu(I).$$ 
We have $T^{q_k}(C^j_k(n)) = C^{j+1}_k(n)$ for $0 \leq j < a_k -1$. In other words, all sub-columns of $C_k$ except for the rightmost one end up shifting over one sub-column to the right. Then $$\mu(T^{q_k}(C_k(n)) \cap C_k(n)) \geq \frac{a_k - 1}{a_k}\mu(C_k(n)).$$

Let $I$ be a level in $C_K$. Then for all $k \geq K$, $I$ is a union of levels in $C_k$. So for all $k \geq K$,
\begin{equation} \label{rigidity_factor}
    \mu(T^{q_k}(I) \cap I) \geq \frac{a_k - 1}{a_k}\mu(I) \geq \frac{1}{2}\mu(I). 
\end{equation}
\end{proof}

\begin{thm}\thlabel{rigid}
$T_\alpha$ is rigid if and only if the number of cuts $(a_k)$ is unbounded (i.e., $\alpha$ is well approximable).
\end{thm}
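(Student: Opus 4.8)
Write $T=T_\al$. Pass to a subsequence $k_1<k_2<\cdots$ with $a_{k_j}\to\infty$; I claim $(q_{k_j})$ is a rigidity sequence. If $I$ is any level of some column $C_N$, then for all $k_j\geq N$ the set $I$ is a union of levels of $C_{k_j}$, so \eqref{rigidity_factor} gives $\mu(T^{q_{k_j}}(I)\cap I)\geq\frac{a_{k_j}-1}{a_{k_j}}\mu(I)$, hence (as $T$ is measure preserving)
$$\mu\bigl(T^{q_{k_j}}(I)\triangle I\bigr)=2\bigl(\mu(I)-\mu(T^{q_{k_j}}(I)\cap I)\bigr)\leq \tfrac{2}{a_{k_j}}\mu(I)\xrightarrow[j\to\infty]{}0.$$
Finite unions of levels form a ring of sets, consisting of sets of finite measure, which generates $\frk B$ (by the definition of rank one); so given a finite-measure $A$ and $\eps>0$ there is such an $E$ with $\mu(A\triangle E)<\eps$, and then $\mu(T^{q_{k_j}}A\triangle A)\leq 2\mu(A\triangle E)+\mu(T^{q_{k_j}}E\triangle E)<2\eps+\mu(T^{q_{k_j}}E\triangle E)$, whose $\limsup$ in $j$ is $\leq 2\eps$. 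Letting $\eps\to0$ proves rigidity.

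\textbf{The hard direction ($(a_k)$ bounded $\Rightarrow$ $T_\al$ not rigid).} Fix $M$ with $a_k\leq M$ for all $k$. It is enough to produce a \emph{single} level $I$ and a constant $c_M>0$ with $\mu(T^nI\triangle I)\geq c_M\,\mu(I)$ for \emph{every} $n\neq0$; indeed then no sequence $n_j\to\infty$ can have $\mu(T^{n_j}I\triangle I)\to0$. Take $I=C_K(0)$ where $K$ is chosen so that $a_K\geq2$ (such $K$ exist, and occur infinitely often, because $\al$ is not of golden type). The plan is to compute $\mu(T^nI\cap I)$ column-by-column. Given $n\geq1$, pick $N$ with $q_N>n$ and $a_N\geq2$; unwinding the cutting and stacking, the levels of $C_N$ that make up $I$ form the set $S:=S_I^{(N)}=\bigl\{\sum_{t=K}^{N-1}c_tq_t:0\leq c_t\leq a_t-1\bigr\}\subset\{0,\dots,q_N-1\}$ (each level of $C_K$ splits into $\prod_{t=K}^{N-1}a_t$ levels of $C_N$, and this representation is unique). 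Since $n<q_N$ and $a_N\geq2$, $T^n$ acts on $C_N$ by raising the level index by $n$, with the portion overflowing the top of $C_N$ re-entering at the bottom of the copy $C_N^{(1)}$ of $C_N$ sitting inside $C_{N+1}$; therefore
$$\frac{\mu(T^nI\cap I)}{\mu(I)}=\frac{1}{|S|}\,\bigl|\{h\in S:(h+n)\bmod q_N\in S\}\bigr|,$$
the cyclic autocorrelation of $S$ in $\bz/q_N\bz$ at lag $n$.

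\textbf{The combinatorial core.} Thus the problem reduces to showing that there is $c_M>0$, depending only on $M$, with
$$\bigl|\{h\in S:(h+r)\bmod q_N\in S\}\bigr|\leq(1-c_M)\,|S|\qquad\text{for every }r\not\equiv0\pmod{q_N}.$$
Two structural facts make this work. First, the minimal gap between elements of $S$ is $q_K$, so small lags ($0<r<q_K$) contribute nothing. Second, $S$ is self-similar: $S=S_I^{(N-1)}+q_{N-1}\{0,\dots,a_{N-1}-1\}$, and using $q_N=a_{N-1}q_{N-1}+q_{N-2}$ together with $\gcd(q_{N-1},q_N)=1$ one relates an autocorrelation bound for $S$ at level $N$ to autocorrelation bounds for $S_I^{(N-1)}$ at level $N-1$ (at the two lags $r\bmod q_{N-1}$ and $(r-q_{N-2})\bmod q_{N-1}$); since $a_t\leq M$, the loss incurred at each step of this descent is controlled, which yields a uniform $c_M$. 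The base case is transparent: for $N=K+1$, $S=\{0,q_K,\dots,(a_K-1)q_K\}$, and coprimality of $q_K$ and $q_{K+1}=a_Kq_K+q_{K-1}$ forces the autocorrelation at any nonzero lag to be $\leq\frac{a_K-1}{a_K}\leq\frac{M-1}{M}$ — essentially the computation already behind \eqref{rigidity_factor}.

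\textbf{Expected main obstacle.} Everything except the displayed autocorrelation inequality is routine. The difficulty there is that a residue $r$ can have several ``signed digit'' expansions $r\equiv\sum_{t}d_tq_t\pmod{q_N}$ with $|d_t|\leq a_t-1$, so one must bound the combined weight $\sum_{(d_t)}\prod_t\frac{a_t-|d_t|}{a_t}$ away from $1$ rather than a single term; in addition, runs of indices with $a_t=1$ (which can be arbitrarily long even when $\al$ is badly approximable) must be handled separately in the overflow/re-entry step, since then $C_N$ does not appear as a full sub-column of $C_{N+1}$ and one must pass to a higher column. I expect this bounded-but-multilayer induction on $N$, with the bookkeeping for the $a_t=1$ runs, to be the technical heart of the argument; a small example such as $\al=[0;2,2,2,\dots]$, where one checks directly that the autocorrelation of $S_{C_2(0)}^{(\ell)}$ stabilizes at $\tfrac34$, is a useful sanity check and guide.
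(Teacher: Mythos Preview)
Your easy direction (unbounded $(a_k)\Rightarrow$ rigid) is correct and is exactly the paper's argument.

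For the hard direction your proposal is not a proof: you reduce non-rigidity to a uniform bound on the cyclic autocorrelation of the Ostrowski digit set $S$, correctly flag this as the crux, and then only \emph{sketch} a self-similar induction, explicitly calling it the ``expected main obstacle''. Nothing is actually established. Two further technical remarks. First, your displayed equality
\[
\frac{\mu(T^nI\cap I)}{\mu(I)}=\frac{1}{|S|}\,\bigl|\{h\in S:(h+n)\bmod q_N\in S\}\bigr|
\]
is not an equality: points in the rightmost subcolumn $C_N^{a_N-1}$ that overflow the top of $C_N$ land in the $q_{N-1}$ spacers above it, not back in $C_N$, so you only get $\leq$. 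That is fortunately the inequality you need, but it should be stated correctly. Second, the induction you propose must simultaneously control several signed-digit expansions of the lag $r$ and the re-entry through arbitrarily long runs of $a_t=1$; you give no mechanism for bounding the resulting sum away from $1$, and your $[0;2,2,2,\dots]$ sanity check does not substitute for one.

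The paper avoids this combinatorics entirely with a short direct argument. Take $I$ to be the base level of $C_1$ and, given $p>1$, let $k$ be the unique index with $q_{k-1}<p\leq q_k$. Rather than computing $\mu(T^pI\cap I)$, bound $\mu(T^pI\cap I^c)$ from below by finding spacers that $T^p$ must hit. The rightmost subcolumn $C_k^{a_k-1}$ carries a $\tfrac{1}{a_k}$ fraction of $I$; sitting above it are $q_{k-2}$ old spacers and $q_{k-1}$ new spacers (call this block $S$), which are disjoint from $I$. Because $q_{k-1}<p\leq q_k$, the set $T^{-p}(S)\setminus S$ contains at least $q_{k-1}$ consecutive levels of $C_k^{a_k-1}$; since $C_k$ is built from $a_{k-1}$ identical copies of $C_{k-1}$, any such block of $q_{k-1}$ consecutive levels carries a $\tfrac{1}{a_{k-1}}$ share of the $I$-mass in that subcolumn. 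Hence at least a $\tfrac{1}{a_ka_{k-1}}\geq\tfrac{1}{M^2}$ fraction of $I$ is sent by $T^p$ into spacers, giving $\mu(T^pI\cap I^c)\geq\tfrac{1}{M^2}\mu(I)$ for every $p>1$, and $T_\al$ is not rigid. No autocorrelation estimate or induction on columns is needed.
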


\begin{proof}
By \ref{rigidity_factor}, $\mu(T^{q_k}(I) \cap I) \geq \frac{a_k - 1}{a_k}\mu(I)$. If $(a_k)$ is unbounded, there exists a subsequence $(a_{k_j})$ such that $a_{k_j} \to \infty$. Then $\lim_{j \to \infty} \frac{a_{k_j} - 1}{a_{k_j}} = 1 $. So
$$\lim_{j \to \infty} \mu(T^{q_{k_j}}(I) \cap I) =\mu(I),$$
and $T$ is rigid.

On the other hand, suppose that the $a_k$ are bounded by some integer $M$ (i.e. $\alpha$ is badly approximable). Let $I$ be the base level of column $C_1$. We show that for all integers $p > 1$, $$\mu(T^{p}(I) \cap I^c) \geq \frac{1}{M^2}\mu(I),$$
which will imply that $T$ is not rigid.
Given an integer $p > 1$, consider column $C_k$ where $k$ is the integer such that $q_{k-1} < p \leq q_k$. Note that $C_k$ is composed of  $a_{k-1}$ copies of $C_{k-1}$ and $q_{k-2}$ spacers on top. Since $I$ does not come from spacers, $I$ will be distributed equally among the $a_{k-1}$ copies as a union of levels of $C_k$. Furthermore, any consecutive stack of $q_{k-1}$ levels contained among these copies will also contain $\frac{1}{a_{k-1}}$ of $I$, by similarity of the copies.

We now cut $C_k$ into $a_k$ sub-columns. The rightmost sub-column, denoted by $C_k^{a_k - 1}$, will contain $\frac{1}{a_k}$ of $I$. By the $\mathbf{n}^{\textbf{th }}$\textbf{level} of $C_k^{a_k - 1}$ we mean $C_k^{a_k - 1}(n)$. Let $S$ denote the $q_{k-2}$ top levels of $C_k^{a_k - 1}$ along with the $q_{k-1}$ new spacers we will be adding on top of this sub-column (see Figure \ref{fig2}). Then $I \cap S = \emptyset$, since $S$ is entirely composed of spacers. 
Since $q_{k-1} < p \leq q_k$, $T^{-p}(S) \setminus S$ must include at least $q_{k-1}$ consecutive levels of $C_k^{a_k - 1}$. Denote the union of these levels by $J$ (See Figure \ref{fig2}). Then $J$ contains at least $\frac{1}{a_ka_{k-1}}$ of $I$. So under $T^p$ for all $p > 1$, at least $\frac{1}{a_ka_{k-1}} \geq \frac{1}{M^2}$ of $I$ is sent to spacers and does not return ot $I$. Therefore, $T$ is not rigid.   

\end{proof}

\begin{figure}[H] 
\begin{tikzpicture}
\draw[white, fill=red!20] (5,4.2)rectangle (6,7.5);
\draw[white, fill=red!20] (0.5,7)rectangle (1,7.5);
\draw[white, fill=SeaGreen!35] (5,1.3)rectangle (6,2.3);
\draw[white, fill=SeaGreen!35] (0.5,6)rectangle (1,6.5);
\draw [ very thick] (1,0) -- (1,5);
\draw [ very thick] (2,0) -- (2,5);
\draw [ very thick] (3,0) -- (3,5);
\draw [ very thick] (5,0) -- (5,5);

\draw [blue, very thick] (0,0) -- (6,0);
\draw [blue, very thick] (0,1) -- (6,1);
\draw [blue, very thick] (0,2) -- (6,2);
\draw [blue, very thick] (0,3) -- (6,3);
\draw [red, very thick] (0,5) -- (6,5);
\draw [red, very thick] (0,4.2) -- (6,4.2);
\draw [blue, very thick] (0,3.8) -- (6,3.8);

\draw [red, very thick] (5,5.5) -- (6,5.5);
\draw [red, very thick] (5,6) -- (6,6);
\draw [red, very thick] (5,6.5) -- (6,6.5);
\draw [red, very thick] (5,7.5) -- (6,7.5);

\draw [decorate,decoration={brace,amplitude=7pt},xshift=-4pt,yshift=0pt]
(5,1.3) -- (5,2.3) node [black,midway,xshift=-0.8cm] 
{\footnotesize $q_{k-1}$};

\draw [decorate,decoration={brace,amplitude=7pt, mirror},xshift=0pt,yshift=4pt]
(0,-0.5) -- (6,-0.5) node [black,midway,yshift=-0.6cm] 
{\footnotesize $a_{k}$};

\draw [decorate,decoration={brace,amplitude=7pt},xshift=0pt,yshift=4pt]
(-0.5,-0.2) -- (-0.5,5) node [black,midway,xshift=-0.7cm] 
{\footnotesize $q_{k}$};

\draw [decorate,decoration={brace,amplitude=7pt, mirror},xshift=0pt,yshift=4pt]
(6,5) -- (6,7.5) node [black,midway,xshift=0.8cm] 
{\footnotesize $q_{k-1}$};

\draw [decorate,decoration={brace,amplitude=7pt, mirror},xshift=5pt,yshift=0pt]
(6,0.0) -- (6,1) node [black,midway,xshift=0.8cm] 
{\footnotesize $q_{k-1}$};

\draw [decorate,decoration={brace,amplitude=7pt, mirror},xshift=5pt,yshift=0pt]
(6,2) -- (6,3.0) node [black,midway,xshift=0.8cm] 
{\footnotesize $q_{k-1}$};

\draw [decorate,decoration={brace,amplitude=7pt, mirror},xshift=5pt,yshift=0pt]
(6,1) -- (6,2.0) node [black,midway,xshift=0.8cm] 
{\footnotesize $q_{k-1}$};

\draw [decorate,decoration={brace,amplitude=7pt, mirror},xshift=.7pt,yshift=0pt]
(6,4) -- (6,5) node [black,midway,xshift=0.8cm] 
{\footnotesize $q_{k-2}$};

\node (A) at (4, 4.8) [] {$\iddots$};
\node (A) at (4, 0.5) [] {$\iddots$};
\node (A) at (4, 3.5) [] {$\iddots$};
\node (A) at (5.5, 7.1) [] {$\vdots$};
\node (A) at (4, 0.5) [] {$\iddots$};
\node (A) at (4, 1.5) [] {$\iddots$};
\node (A) at (4, 2.5) [] {$\iddots$};
\node(A) at (0, 7.3) [] {$S=$};
\node(A) at (0, 6.3) [] {$J=$};

\draw [->, very thick] (8,3) -- (9,3);
\draw[white, fill=red!20] (10,4.2)rectangle (11,7.5);
\draw[white, fill=SeaGreen!35] (10,4.5)rectangle (11,5.5);

\draw [blue, very thick] (10,0) -- (11,0);
\draw [blue, very thick] (10,1) -- (11,1);
\draw [blue, very thick] (10,2) -- (11,2);
\draw [blue, very thick] (10,3) -- (11,3);
\draw [red, very thick] (10,5) -- (11,5);
\draw [red, very thick] (10,4.2) -- (11,4.2);
\draw [blue, very thick] (10,3.8) -- (11,3.8);
\draw [red, very thick] (10,5.5) -- (11,5.5);
\draw [red, very thick] (10,6) -- (11,6);
\draw [red, very thick] (10,6.5) -- (11,6.5);
\draw [red, very thick] (10,7.5) -- (11,7.5);
\end{tikzpicture}
\caption{}\label{fig2}
\end{figure}

This means that we have constructed an invariant measure for an irrational rotation that is not rigid. 
On the other hand, if it is well approximable, it is rigid and it has an uncountable centralizer.


\subsection{Observations and Questions:} 
\begin{itemize}

\item Does the centralizer of $T_\alpha$ contain $T_{\alpha'}$ for any other irrational $\alpha' \neq \alpha$? This would mean that the measure for $T_\alpha$ is equivalent to the measure for $T_{\alpha'}$.

\item Let $\nu_\alpha = \mu \circ f_\alpha^{-1}$. Consider the system $(X, \nu_{\alpha}, R_{\alpha})$ for a badly approximable $\alpha$. If   $R_\frac{\alpha}{2}$ is 
nonsingular for $\nu_\alpha$ then one can show it is measure-preserving for $R_\alpha$, so it would be in the centralizer of $R_{\alpha}$. 
This would  make  $C(R_\alpha)$ non-trivial since  $R_\frac{\alpha}{2}$ cannot be isomorphic to a power of $R_\alpha$. By \thref{uniqueEigen}, since $\alpha$ is badly approximable, the only eigenvalues for $(R_\alpha, \nu_\alpha)$ are $e^{2 \pi i n\alpha}$ for $n \in \mathbb{Z}$, so that $e^{2 \pi i \frac{\alpha}{2} }$ cannot be an eigenvalue. Similarly, $e^{2 \pi i \frac{\alpha}{2} }$ cannot be an eigenvalue for $R_{n\alpha}$ for any $n \in \mathbb{N}$, since $R_{n\alpha}$ are factors of $R_\alpha$.
However, $e^{2 \pi i \frac{\alpha}{2}}$ is an eigenvalue for $R_{\frac{\alpha}{2}}$ for all measures. Therefore, it cannot be isomorphic to any power of $R_\alpha$. 

    \item Given $\nu_\al$, is there a $\be$ so that $R_\be$ preserves $\nu_\al$?

    
\end{itemize}

\section{Type III}\label{type3}
We modify our construction to obtain  nonsingular rank-one transformations with no equivalent $\sigma$-finite $T$-invariant measure. Here the levels in the columns are intervals but not necesarily of the same length; these transformations are conservative and ergodic \cite{RuSi89}.
Given a nonsingular transformation $T$, let $\omega_n = \frac{d\mu \circ T^n}{d\mu}$ be the Radon-Nikodym derivative $T^n$.
\defin{
Given $(X, \mu, T)$, a nonsingular, conservative ergodic, $\sigma$-finite system, the \textbf{ratio set} $r(T)$ is the set of nonnegative real numbers $t$ such that for all $\epsilon > 0$ and for all measurable sets $A$ there exists $n > 0$ such that $$\mu(A \cap T^{-n}A \cap \{x:\omega_{n}(x) \in B_{\epsilon}(t)\}) > 0,$$
where $B_{\epsilon}$ is an $\eps$ neighborhood of $t$.
}

It is known that $r(T) \setminus \{0\}$ is a multiplicative subgroup of the positive real numbers, see e.g., \cite{DaSi09}. If $r(T) \neq \{1\}$ then $T$ admits no equivalent, $\sigma$-finite $T$-invariant measure, and we say it is of type III. One can further classify all type III transformations based on their ratio set. 
\begin{enumerate}
    \item Type III$_\lambda$: $r(T) = \{\lambda^n: n \in \mathbb{Z}\} \cup \{0\}$ for some $\lambda \in (0, 1)$. 
    \item Type III$_0$: $r(T) = \{0, 1\}$
    \item Type III$_1$: $r(T) = [0, \infty)$
\end{enumerate}

Note that this $\lambda$ is not the same as the $\lambda$ we used earlier to refer to our eigenvalue.

We show how to modify our construction $T_\alpha$ in various ways to make it each of these three types. While we do not include the proofs, other properties of these transformations similar to those for the infinite measure-preserving case can also be established. 

\subsection{Type III$_\lambda$}

First we modify our construction of $T_\alpha$ to obtain a type III$_\lambda$ transformation for each $0 < \lambda < 1$. This will give us another proof of Keane's result that there are infinitely many inequivalent, nonsingular measures for the rotation on the circle.

We fix an irrational $\alpha$ not of golden type, and let $a_k, q_k$ be defined as before. Next we choose some $\lambda \in (0,1)$. Let $C_1$ be the unit interval. We will still be cutting each level of $C_k$ into $a_k$ pieces, and adding $q_{k-1}$ spacers. However, not all of the cuts will have equal width. Specifically, we want the proportions of lengths of different pieces to be $\lambda$. Consider any level $I$ of $C_k$. We cut $I$ into $\lceil\frac{a_k}{2}\rceil$ pieces of length $\frac{1}{\lceil\frac{a_k}{2}\rceil + \lambda\lfloor\frac{a_k}{2}\rfloor}\mu(I)$, and into $\lfloor\frac{a_k}{2}\rfloor$ pieces of smaller length $\frac{\lambda}{\lceil\frac{a_k}{2}\rceil + \lambda\lfloor\frac{a_k}{2}\rfloor}\mu(I)$ (See Figure \ref{III}). Note that for $a_k$ even, $\lceil\frac{a_k}{2}\rceil = \lfloor\frac{a_k}{2}\rfloor = \frac{a_k}{2}$, while for $a_k$ odd, $\lceil\frac{a_k}{2}\rceil = \frac{a_k+1}{2}$ and $\lfloor\frac{a_k}{2}\rfloor = \frac{a_k -1}{2}$.

\begin{figure}[H] 
\begin{tikzpicture}

\draw [ very thick] (0,0) -- (6,0);
\draw [red, very thick] (4,-0.3) -- (4,0.3);

\draw [blue, very thick] (0.8,-0.1) -- (0.8,0.1);
\draw [blue, very thick] (1.6,-0.1) -- (1.6,0.1);
\node (A) at (2.4, 0.2) [] {$\dots$};
\draw [blue, very thick] (3.2,-0.1) -- (3.2,0.1);

\draw [blue, very thick] (4.2,-0.1) -- (4.2,0.1);
\draw [blue, very thick] (4.4,-0.1) -- (4.4,0.1);
\draw [blue, very thick] (4.6,-0.1) -- (4.6,0.1);
\node (A) at (5.2, 0.2) [] {$\dots$};
\draw [blue, very thick] (5.8,-0.1) -- (5.8,0.1);

\draw [decorate,decoration={brace,amplitude=7pt, mirror},xshift=0pt,yshift=4pt]
(0,-0.5) -- (4,-0.5) node [black,midway,yshift=-0.6cm] 
{\footnotesize $\lceil \frac{a_{k}}{2} \rceil$};

\draw [decorate,decoration={brace,amplitude=7pt, mirror},xshift=0pt,yshift=4pt]
(4,-0.5) -- (6,-0.5) node [black,midway,yshift=-0.6cm] 
{\footnotesize $\lfloor \frac{a_{k}}{2} \rfloor$};

\end{tikzpicture}
\caption{}\label{III}
\end{figure}

Let all the spacers have the same length as the rightmost piece of the top level. Define $T_{\alpha, \lambda}$ so that when applied to a level, it acts as the unique affine map taking it to the level above. The Radon-Nikodym derivative $\frac{d\mu T}{d\mu}$ on a level is a measure of the contraction or expansion that takes place between it and the level above. It is clear that these derivatives are powers of $\lambda$. 

The above proofs that $T_\alpha$ is totally ergodic (\thref{toterg}), and that the eigenfunction for $\lambda$ is injective (\thref{injective}) still hold. 

We now show that this modified construction is actually of type III$_\lambda$. 

\begin{thm}
$T := T_{\alpha, \lambda}$ is of type III$_\lambda$.
\end{thm}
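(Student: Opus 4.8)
The plan is to compute the ratio set $r(T)$ directly from the definition, showing that it equals $\{\lambda^n : n \in \bz\} \cup \{0\}$. The construction guarantees that every Radon--Nikodym derivative $\omega_n(x)$ takes values only in $\{\lambda^m : m \in \bz\}$, since $T$ is built from affine maps whose slopes are integer powers of $\lambda$ (each level is mapped affinely to the level above it, and the length ratios between levels are powers of $\lambda$ by construction). This immediately gives $r(T) \subseteq \{\lambda^n : n \in \bz\} \cup \{0\}$, because an $\eps$-neighborhood of a point not of this form (for small enough $\eps$) contains no value of any $\omega_n$, so the defining measure is zero. It remains to prove the reverse inclusion, and since $r(T)\setminus\{0\}$ is a multiplicative group it suffices to show $\lambda \in r(T)$ (then all powers $\lambda^n$ follow, and $0 \in r(T)$ for any type III$_\lambda$ transformation with $\lambda<1$, which can be checked separately or absorbed into the same estimate).

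To show $\lambda \in r(T)$, I would fix a measurable set $A$ of positive measure and $\eps > 0$, and use the rank-one structure: by the definition of rank-one, there is a column $C_k$ and a level $I = C_k(n)$ with $\mu(A \cap I) > (1-\delta)\mu(I)$ for $\delta$ as small as desired. Now I want to find $n > 0$ with $\mu(A \cap T^{-n}A \cap \{\omega_n \in B_\eps(\lambda)\}) > 0$. The idea is to choose $n$ so that $T^{-n}$ maps a large piece of a ``long'' sub-column level onto a ``short'' sub-column level (or vice versa) within the same original level $I$, at a later stage $C_{k'}$, $k' \ge k$: since within the construction of $C_{k'+1}$ each level of $C_{k'}$ is cut into $\lceil a_{k'}/2\rceil$ long pieces and $\lfloor a_{k'}/2\rfloor$ short pieces, and $T^{q_{k'}}$ shifts one sub-column to the next, there are adjacent sub-columns of differing widths; applying the appropriate power of $T$ (close to $q_{k'}$) realizes a local Radon--Nikodym derivative equal to $\lambda^{\pm 1}$ on a set of positive measure, and since $a_{k'} \ge 2$ infinitely often (because $\alpha$ is not of golden type — indeed $a_{k'} \ge 2$ for infinitely many $k'$ unless $\alpha$ is badly approximable with bounded partials, but even then the modified construction still has long/short pieces whenever $a_{k'} \ge 2$, which must happen infinitely often since a tail of $1$'s is golden type), both $A$ and its preimage occupy most of the relevant levels, forcing the intersection to have positive measure.

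The main obstacle I expect is the bookkeeping needed to produce a single power $n$ of $T$ that simultaneously (i) has $\omega_n$ exactly equal to $\lambda$ on a set $E$ of positive measure, (ii) has $E$, $T^n E \subseteq$ (mostly) the good level $I$ so that the density of $A$ forces $E \cap A \cap T^{-n}A$ to be nonempty of positive measure. The cleanest route is probably: pick $k'$ large with $a_{k'}\ge 2$, look at two horizontally adjacent sub-columns $C_{k'}^{j}$ and $C_{k'}^{j+1}$ of $C_{k'}$ inside the cutting that builds $C_{k'+1}$, one long and one short, note $T^{q_{k'}}$ sends one to the other with constant Radon--Nikodym derivative $\lambda$ or $\lambda^{-1}$, and that a level of $C_{k'}$ inside $I$ (which exists since $I$ is a union of levels of $C_{k'}$) meets both; then $\mu(A\cap C_{k'}^j(n')) $ and $\mu(A \cap C_{k'}^{j+1}(n'))$ are both large fractions of those pieces because $A$ fills most of $I$, so their intersection after translating by $T^{\pm q_{k'}}$ is positive. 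One must also handle the parity/endpoint case where $j = \lceil a_{k'}/2\rceil - 1$ is long and $j+1$ is the first short piece, which is exactly the adjacency one wants. I would also remark that total ergodicity and injectivity of the eigenfunction were already noted to transfer, so the only new content is this ratio-set computation, and conclude that $(X, \mu, T_{\alpha,\lambda})$ is of type III$_\lambda$, hence $\nu = \mu \circ f_\alpha^{-1}$ is a nonsingular type III$_\lambda$ measure on $\bs^1$ invariant under no $\sigma$-finite measure equivalent to it.
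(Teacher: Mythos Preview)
Your strategy is the paper's strategy: show $r(T)\subseteq\{\lambda^n\}\cup\{0\}$ from the form of the Radon--Nikodym derivatives, then prove $\lambda\in r(T)$ by finding, inside a level $I$ that is very full of $A$, one long sub-piece and one short sub-piece that are each more than half full of $A$, and use the power of $T$ carrying one to the other.

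There is, however, a genuine quantitative gap in your implementation. You insist on using two \emph{adjacent} sub-columns (the last long one and the first short one) linked by $T^{q_{k'}}$. For both of those two particular pieces to be more than half full of $A$, the level must be $(1-\delta)$-full with $\delta$ smaller than half the relative measure of a single short piece, i.e.\ $\delta\lesssim \lambda/a_{k'}$. But $\delta$ has to be fixed \emph{before} you locate the level, while $a_{k'}$ is only known \emph{after}; when the partial quotients are unbounded (say $a_k=k$) this circularity cannot be broken, and the sentence ``$\mu(A\cap C_{k'}^j(n'))$ and $\mu(A\cap C_{k'}^{j+1}(n'))$ are both large fractions of those pieces because $A$ fills most of $I$'' is not justified. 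The detour through a later stage $k'>k$ does not help: passing from a $(1-\delta)$-full level of $C_k$ to a level of $C_{k'}$ inside it gives no control on the density of $A$ in that sublevel.

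The paper avoids this by dropping the adjacency requirement. It observes that the total measure of the short pieces is at least $\dfrac{\lambda/2}{1+\lambda/2}\,\mu(I)$ for every $a_k\ge 2$ (the worst case is $a_k=3$), so a density threshold depending only on $\lambda$ --- namely $\dfrac{1+\tfrac{3}{8}\lambda}{1+\tfrac{1}{2}\lambda}$ --- forces both the long region and the short region of $I$ to be at least $3/4$-full of $A$. By pigeonhole, \emph{some} long piece $I_1$ and \emph{some} short piece $I_2$ are each more than $1/2$-full; since both are levels of $C_{k+1}$, the appropriate power $T^n$ carries $I_1$ onto $I_2$ with $\omega_n\equiv\lambda$, and $I_1\cap A\cap T^{-n}(I_2\cap A)$ has positive measure. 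Working directly at stage $k$ with $a_k\ge 2$ (no separate $k'$) is all that is needed.
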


\begin{proof}
It is clear that $\omega_{n}(x) \in \{\lambda^n\}$. Therefore, the ratio set $r(T) \in \{\lambda^n\} \cup \{0\}$. So it is sufficient for us to show that $\lambda$ belongs to $r(T)$. 
Given a measurable set $A$, we want to find a level sufficiently full of $A$ so that at least one cut from the first $\lceil\frac{a_k}{2}\rceil$ pieces and at least one cut from the second $\lfloor\frac{a_k}{2}\rfloor$ pieces are both more than $1/2$-full of $A$. Denote these two cuts by $I_1$ and $I_2$ respectively. Then $I_1$ and $I_2$ will both be levels in $C_{k+1}$, so that there exists some $n$ such that $T^n(I_1) = I_2$. Then $B = I_1 \cap A \cap T^{-1}(I_2 \cap A)$ has positive measure, and $\omega_n(x) = \lambda$ for $x \in B$. Therefore, $\lambda \in r(T)$ as desired.

If we choose a level $I$ that is at least $\frac{\lceil\frac{a_k}{2}\rceil + (\frac{3}{4})\lfloor\frac{a_k}{2}\rfloor \lambda}{\lceil\frac{a_k}{2}\rceil + \lambda\lfloor\frac{a_k}{2}\rfloor}$-full of $A$ then both the first $\frac{\lceil\frac{a_k}{2}\rceil}{\lceil\frac{a_k}{2}\rceil + \lambda\lfloor\frac{a_k}{2}\rfloor}$ and the second $\frac{\lfloor\frac{a_k}{2}\rfloor\lambda}{\lceil\frac{a_k}{2}\rceil + \lambda\lfloor\frac{a_k}{2}\rfloor}$ of the level will be at least $3/4$-full of $A$. This is because the first $\frac{\lceil\frac{a_k}{2}\rceil}{\lceil\frac{a_k}{2}\rceil + \lambda\lfloor\frac{a_k}{2}\rfloor}$ of the level is always larger than the the second $\frac{\lfloor\frac{a_k}{2}\rfloor\lambda}{\lceil\frac{a_k}{2}\rceil + \lambda\lfloor\frac{a_k}{2}\rfloor}$.

In the case where $a_k$ is even, we end up with the first $\frac{a_k}{2}$ pieces taking up $\frac{1}{1+\lambda}$ of the level, and the second $\frac{a_k}{2}$ of the pieces taking up $\frac{\lambda}{1+ \lambda}$ of the level. In the case where $a_k$ is odd, we have that the first $\frac{a_k+1}{2}$ pieces take up $\frac{1}{1+(\frac{a_k-1}{a_k+1})\lambda}$ of the level, and that the second $\frac{a_k-1}{2}$ pieces take up $\frac{(\frac{a_k-1}{a_k+1})\lambda}{1+(\frac{a_k-1}{a_k+1})\lambda}$ of the level. We can see that $$\max_{n \text{ odd } > 1} \frac{1}{1+(\frac{n-1}{n+1})\lambda} = \frac{1}{1+\frac{1}{2}\lambda} > \frac{1}{1+\lambda},$$
which occurs for $n=3$. 

Therefore, the first $\lceil\frac{a_k}{2}\rceil$ cuts take up at most $\frac{1}{1+\frac{1}{2}\lambda}$ of $I$. 

As a result, if we choose a level such that $I$ is at least $\frac{1+ (\frac{3}{4})(\frac{1}{2})\lambda}{1+\frac{1}{2}\lambda}$-full of $A$ then both the first $\frac{\lceil\frac{a_k}{2}\rceil}{\lceil\frac{a_k}{2}\rceil + \lambda\lfloor\frac{a_k}{2}\rfloor}$ and the second $\frac{\lfloor\frac{a_k}{2}\rfloor\lambda}{\lceil\frac{a_k}{2}\rceil + \lambda\lfloor\frac{a_k}{2}\rfloor}$ of the level will be at least $3/4$-full of $A$, independent of what $a_k$ is. Therefore, there must be at least one cut in the first $\lceil\frac{a_k}{2}\rceil$ cuts and one cut in the second $\lfloor\frac{a_k}{2}\rfloor$ cuts that are more than $1/2$-full of $A$, as desired. We must also take care to choose $C_k$ so that $a_k \neq 1$, so that there are actually cuts to consider. This is always possible since $\alpha$ is not of golden type.

\end{proof}

\subsection{Type III$_0$}
For $\alpha$ well approximable, we can also modify our construction to make it type III$_0$. To do so, we choose a subsequence $a_{k_j}$ of the cuts such that $a_{k_j} \to \infty$. For all $C_k$ where $k$ is not in this subsequence, we simply cut the levels into pieces of equal length. However, for $k$ in our sequence, we cut as follows: Let the first cut take up half of the level, and divide the rest of the cuts equally among the second half of the level. This is as in Example 6.3 in \cite{DaSi09}.
\subsection{Type III$_1$}
In order for a transformation to be of type III$_1$, it is sufficient to ensure that $\lambda, \beta \in r(T)$ for two real numbers $\lambda$ and $ \beta $ in $(0, 1)$ that are rationally independent, see e.g., \cite{DaSi09}. So we construct our transformation as we do in the type III$_\lambda$ case with the following modification: We alternate, in even and odd times, between cutting into proportions of  $\lambda$ and $\beta$ on the columns where $a_k \neq 1$.

\bibliographystyle{plain}
\bibliography{ErgodicBibMasterAug12018}

\end{document}